\tikzset{node distance=2cm, auto}
\def\0D{\Delta^{(0)}}
\def\1D{\Delta^{(1)}}
\newcommand{\Zc}{\mathcal{Z}}
\newcommand{\hmod}{{_H\mathcal{M}}}
\newcommand{\hmodalg}{{_\mathcal{H} \mathcal{M}}}
\newtheorem{theorem}{Theorem}[section]
\newtheorem{proposition}[theorem]{Proposition}
\newtheorem{lemma}[theorem]{Lemma}
\newtheorem{corollary}[theorem]{Corollary}
\newtheorem{definition}[theorem]{Definition}
\theoremstyle{remark}
\newtheorem{remark}[theorem]{Remark}
\def\build#1_#2^#3{\mathrel{\mathop{\kern 0pt#1}\limits_{#2}^{#3}}}
\newcommand{\vect}{\text{Vec}}
\newcommand{\id}{1}
\newcommand{\la}{\triangleright}
\newcommand{\ra}{\triangleleft}
\numberwithin{equation}{section}
\def\i{\iota}
\def\ot{\otimes}
\def\part{\partial}
\def\ot{\otimes}
\def\Hom{\mathop{\rm Hom}\nolimits}
\def\build#1_#2^#3{\mathrel{
\mathop{\kern 0pt#1}\limits_{#2}^{#3}}}
\numberwithin{equation}{section}
\newcommand{\comment}[1]{\relax}
\begin{document}
\title[cyclic cohomology of quasi-Hopf algebras and Hopf algebroids]{A categorical approach to cyclic cohomology of quasi-Hopf algebras and Hopf algebroids.}

\author[I. Kobyzev]{Ivan Kobyzev}

\address{Department of Pure Mathematics, University of
	Waterloo, Waterloo, ON N2L 3G1, Canada}

\email{ikobyzev@uwaterloo.ca}

\author[I. Shapiro]{Ilya Shapiro}

\address{Department of Mathematics and Statistics,
	University of Windsor, 401 Sunset Avenue, Windsor, Ontario N9B 3P4, Canada}

\email{ishapiro@uwindsor.ca}

\maketitle
\begin{abstract}
We apply categorical machinery to the problem of defining cyclic cohomology with coefficients in two particular cases, namely quasi-Hopf algebras and Hopf algebroids.  In the case of the former, no definition was thus far available in the literature, and while a definition exists for the latter, we feel that our approach demystifies the seemingly arbitrary formulas present there.  This paper emphasizes the importance of working with a biclosed monoidal category in order to obtain natural coefficients for a cyclic theory that are analogous to the stable anti-Yetter-Drinfeld contramodules for Hopf algebras.

\end{abstract}

\medskip

{\it 2010 Mathematics Subject Classification.} Monoidal category (18D10), abelian and additive category (18E05), cyclic homology (19D55), Hopf algebras	 (16T05).

\epigraph{\textit{Everything that's extreme is difficult. The middle parts are done more easily. The very centre requires no effort at all. The centre is equal to equilibrium. There's no fight in it.} }{Daniil Kharms, \textit{Blue notebook}}

\section{Introduction}
After its introduction by Tsygan and Connes independently in the 1980s, cyclic homology has been the subject of extensive research.  An equivariant version for Hopf algebras begun with Connes-Moscovici \cite{CM98, CM99} and culminated with the definition of stable anti-Yetter-Drinfeld module coefficients by Hajac-Khalkhali-Rangipour-Sommerhauser \cite{hkrs1, hkrs2} and independently by Jara-Stefan \cite{js}, and stable anti-Yetter-Drinfeld contramodule coefficients by Brzezinski \cite{contra}.

Beyond Hopf algebras one has a number of Hopf-like notions, relaxing the various axioms of a Hopf algebra.  One is naturally interested in extending an equivariant version of cyclic homology, together with the relevant coefficients, to these more general settings. The problem turns out to be highly non-trivial as the definitions rely almost entirely on the correct notion of coefficients; these are defined by complicated formulas that are surprisingly difficult to generalize.

An approach to this problem is afforded by \cite{hks} where it is shown that the monoidal-categorical point of view provides a way to extract the necessary formulas from a conceptual model of the situation.  It is demonstrated that indeed one obtains the usual definitions in the Hopf algebra case.   Here we continue the application of the machinery developed there.  Another key observation for us is the importance of the existence of internal homomorphisms in our monoidal category; more precisely these are right adjoints to the tensor product.  A category possessing internal homs is said to be biclosed which refers to the existence of both left and right internal homs.  This property turns out to be the sole ingredient required in order to define an analogue of stable anti-Yetter-Drinfeld contramodule coefficients in the more general Hopf-like setting.

In this paper we deal with two particular cases of Hopf-like objects.  However, before we begin with these applications we examine the setting of Hopf algebras, namely that of stable anti-Yetter-Drinfeld contramodule coefficients as in \cite{contra}.  Our approach yields the same definitions, but the transition from the categorical perspective to formulas is very straightforward and serves both as motivation and contrast to the main two cases of this paper considered subsequently.

We begin with addressing the issue of defining cyclic cohomology for quasi-Hopf algebras, these were originally introduced by Drinfeld in \cite{Drinfeld}.  Roughly speaking, we obtain quasi-Hopf algebras by weakening the co-associativity condition in the definition of a Hopf algebra to that of co-associativity up to a prescribed isomorphism. An important classical fact is that the category of representations of a quasi-Hopf algebra is still a monoidal category, and what is crucial for us is that this monoidal category is biclosed.  From quite general considerations one can then provide a conceptual description of coefficients, followed by the unraveling of the definitions to obtain explicit and immensely unpleasant formulas.  The complexity of the resulting expressions explains the absence of any definition of cyclic homology for quasi-Hopf algebras thus far.

The second case that we consider is that of Hopf algebroids.  Various related concepts purporting to describe these objects exist and we choose to work with the definitions in \cite{kow} which follow those of \cite{Bohm}.  Again, roughly, the generalization from Hopf algebras consists of replacing the ground field $k$, over which everything is tensored, by a noncommutative ring $R$.  Thus the objects underlying representations of a Hopf algebroid consist not of vector spaces over $k$, but of bimodules over $R$.  It seems that Hopf algebroids provide an example of a construct that is the most noncommutative in noncommutative geometry.

We point out that a definition of cyclic cohomology with an analogue of stable anti-Yetter-Drinfeld contramodule coefficients has been given by  Kowalzig in \cite{kow_contra}.  Our goal here is to explain the formulas that appear there as an inevitable consequence of the general machinery.

The paper is organized as follows.  In Section \ref{s:General} we discuss some generalities as in \cite{hks, s} concerning a conceptual definition of contramodule coefficients for a suitable monoidal category.  A new phenomenon that arises in this paper is the difficulty of proving that a certain natural map is an isomorphism.  It requires that we deal with weak centers as opposed to (strong) centers; this particular issue does not occur with Hopf algebras.  Fortunately we are able to sidestep the problem by showing that stability, a condition that is required in any case for a Hopf-type theory, guarantees that our objects of interest are indeed in the strong center.

Section \ref{s:Hopf} deals with the motivational case of contramodule coefficients for Hopf algebras.  In Section \ref{s:qHopf} we recall the definition of quasi-Hopf algebras and give a prove that their categories of modules are biclosed (this is a fact known to the experts in the field \cite{BPO_closed, majid_closed, SS_closed}). In Section \ref{s:qHopf-ayd-contra} we unravel the categorical definitions of Section \ref{s:General} to obtain definitions of anti-Yetter-Drinfeld contramodules for quasi-Hopf algebras.  We remark that unlike the Hopf algebra case, there are two different descriptions via formulas of the conceptually defined category of coefficients.    Stability for type I contramodules is explicitly written down as well.

The Sections \ref{s:Hopfroids} and \ref{s:Hopfroid-ayd-contra} address the definitions of Hopf algebroids, the biclosed property of their categories of modules (we provide a direct proof, but the fact can also be obtained from \cite{sch_closed}) and finally the translation into formulas of the definition of anti-Yetter-Drinfeld contramodules and their stability.

We remark that we handled the case of anti-Yetter-Drinfeld \emph{module} analogues in our follow-up paper \cite{KS}.  We did not want to increase the length of the exposition any further and modules, despite being more familiar, are actually technically more difficult and less natural/suitable from the categorical perspective on the subject of coefficients.

\bigskip

\textbf{Acknowledgments}: The authors wish to thank Masoud Khalkhali for stimulating questions and discussions.  Furthermore, we are grateful to the referee for the constructive comments and useful references. The research of the second author was supported in part by the NSERC Discovery Grant number 406709.

\section{Generalities}\label{s:General}

In this section we will establish the general formalism for cyclic cohomology with anti-Yetter-Drinfeld contramodule coefficients.  Recall from \cite{hks} that the main ingredient in constructing Hopf-cyclic cohomology is a symmetric 2-contratrace. We will briefly recall the necessary definitions.

Let $\mathcal{M}$ be a monoidal category. Consider  an $\mathcal{M}$-bimodule category $\mathcal{M}^*=\text{Fun}(\mathcal{M}^{op}, \vect )^{op}$, where $\vect$ denotes $k$-vector spaces,  with actions defined by:
$$ M \la F(-) := F(- \ot M ), \quad F(-) \ra M := F(M \ot -).  $$
Any element in $\Zc_{\mathcal{M}}(\mathcal{M}^* ) $ is called a \emph{2-contratrace}. In particular it requires the existence of  natural isomorphisms
$$\i_M: F(- \ot M) \rightarrow F(M \ot - ), $$ and a 2-contratrace is called symmetric, if the composition $F(M\ot M')\to F(M'\ot M)\to F(M\ot M')$ is $Id$ for all $M,M'\in \mathcal{M}$.

By \cite[Proposition 3.9]{hks} a unital associative algebra object $A$ in $\mathcal{M}$ and a symmetric 2-contratrace $F$ provides us with a cocyclic object $C^\bullet = F(A^{\bullet+1})$; its cyclic cohomology recovers  exactly the classical Hopf-cyclic cohomology of an algebra $A$ with coefficients in a stable anti-Yetter-Drinfeld contramodule for the case $\mathcal{M}=\hmod$, the category of modules over a Hopf algebra $H$.  More precisely, $F(-)=\Hom_H(-,M)$ where $M$ is a stable anti-Yetter-Drinfeld contramodule.  This is explained in Section \ref{s:Hopf}.

A crucial element for our constructions is a biclosed monoidal category $\mathcal{M} $. The property of being biclosed implies in particular the existence of the following adjunctions
for $M, V, W\in \mathcal{M}$:
$$ \ \Hom_\mathcal{M} (W\otimes V, M) \simeq \Hom_\mathcal{M} (W, \Hom^l(V,M)),$$

and
$$\Hom_\mathcal{M} (V\otimes W, M) \simeq \Hom_\mathcal{M} (W, \Hom^r(V,M)),$$
where $\Hom^l(V,M) $ and $\Hom^r(V,M)$ are left and right internal homomorphisms respectively.

As in \cite{s}, we can introduce the contragradient $\mathcal{M}$-bimodule category $\mathcal{M}^{op}$. Specifically, for $M\in  \mathcal{M}^{op}$ and $V\in  \mathcal{M}$, the actions are given by:
\begin{equation}
M\ra V := \Hom^r(V, M), \quad ~~  \quad ~~ V\la M :=\Hom^l(V, M)  .
\end{equation}

The natural object to consider is the center of this bimodule contragradient category: $\Zc_{\mathcal{M}}(\mathcal{M}^{op})$ (see \cite{GNN} for the definition of the center of a bimodule category). However, in some situations (e.g. quasi-Hopf algebras or Hopf algebroids) it becomes too restrictive. If in the definition of a (strong) center element $M\in\mathcal{M}^{op}$ we relax the condition that the maps $\tau:M'\la M\to M\ra M'$ are isomorphisms, we get a \emph{weak} center. More formally:

\begin{definition}
	The weak center $ \text{w-}\Zc_{\mathcal{M}}(\mathcal{N})$ of a
	$\mathcal{M}$-bimodule category $\mathcal{N}$ consists of objects that are pairs $(
	N, \tau_{N,-})$, where $N \in \mathcal{N}$ 	and $\tau_{N,-}$	is a family of natural
	morphisms such that for $V \in \mathcal{M}$ we have: $\tau_{N,V}: V \la N \to N \ra V $, satisfying the hexagon axiom as in the usual definition of center.

	A morphism $(N, \tau_{N,-}) \rightarrow (N', \tau_{N',-})$ is a  morphisms $t: N\rightarrow N'$ in  $\mathcal{N}$ satisfying ${\tau_{N',V}\circ (id_V\triangleright t)} =  (t\triangleleft id_V)  \circ \tau_{N,V}$ for all $V \in \mathcal{M}$.
\end{definition}

\begin{remark}
	One can give a reformulation of this definition in a categorical language. Consider an $\mathcal{M}$-bimodule category $\mathcal{N}$ as a weak 2-category $\mathcal{B}$ with two objects $L$ and $R$, where hom-categories are: $\mathcal{B}(L;L) = \mathcal{B}(R;R) = \mathcal{M}$, $\mathcal{B}(L;R) = \mathcal{N}$ and
	$\mathcal{B}(R;L) =\emptyset$. The composition of 1-cells given by the monoidal product of $\mathcal{M}$ and the $\mathcal{M}$-actions on $\mathcal{N}$. Also regard $\mathcal{M}$ as a weak 2-category with a single object. There are two inclusion functors $l,r: \ \mathcal{M}\rightarrow\mathcal{B} $, mapping $\mathcal{M}$ identically to the endohom-categories of $L$ and $R$ respectively. Then the objects of $\text{w-}\Zc_{\mathcal{M}}(\mathcal{N}) $ are the lax natural transformations $l \rightarrow r$.
\end{remark}

 We need one more definition.

 \begin{definition}
 	\label{d:Z'}
 	Let $\mathcal{M} $ be a biclosed monoidal category, $\mathcal{M}^{op}$ a contragradient category as above. Then denote by $\Zc'_\mathcal{M}(\mathcal{M}^{op})$ the full subcategory of the \emph{weak center} that consists of objects such that the identity map $Id\in\Hom_\mathcal{M}(M,M)$ is mapped to same via \begin{equation}\label{stabilitycond}\Hom_\mathcal{M}(M,M)\simeq\Hom_\mathcal{M}(\id,M\la M)\to\Hom_\mathcal{M}(\id,M\ra M)\simeq \Hom_\mathcal{M}(M,M),\end{equation}
 	where the map in the middle is postcomposition with $\tau$ and the isomorphisms come from definitions of actions in $\mathcal{M}^{op}$.
 	This condition is called \emph{stability}.
 \end{definition}

 The next lemma shows that stability erases the difference between weak center and center.

 \begin{lemma}\label{weakstrong}
  In our current notation  $M \in \Zc'_\mathcal{M}(\mathcal{M}^{op}) $ implies that $M \in \Zc_\mathcal{M}(\mathcal{M}^{op})$.
 \end{lemma}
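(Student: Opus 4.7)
The plan is to prove, via Yoneda, that $\tau_{M,V}$ is an isomorphism for every $V\in\mathcal{M}$. The biclosed adjunctions turn $\tau_{M,V}$ into the natural family of maps
\[T^W_V\colon \Hom_\mathcal{M}(W\otimes V, M)\longrightarrow \Hom_\mathcal{M}(V\otimes W,M),\qquad W\in\mathcal{M},\]
obtained by post-composing any $W\to V\la M=\Hom^l(V,M)$ with $\tau_{M,V}$ and unwinding the two hom-tensor adjunctions; it therefore suffices to show that each $T^W_V$ is a bijection.

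I would first recast all the structural ingredients in this language. Naturality of $\tau_{M,-}$ in the second variable becomes
\[T^W_V(g)\circ(\psi\otimes W)=T^W_{V'}\bigl(g\circ(W\otimes \psi)\bigr)\]
for $\psi\colon V'\to V$ and $g\colon W\otimes V\to M$. Unravelling the hexagon axiom through the adjunctions and the middle identification $V\la(M\ra W)=(V\la M)\ra W$ yields
\[T^X_{V\otimes W}(g)=T^{W\otimes X}_{V}\bigl(T^{X\otimes V}_{W}(g)\bigr)\]
for $g\colon X\otimes V\otimes W\to M$. Finally, tracing through Definition \ref{d:Z'} shows that the stability condition is precisely the single equation $T^{\id}_M(Id_M)=Id_M$.

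The core step is to upgrade this one instance of stability to $T^{\id}_X=Id_{\Hom(X,M)}$ for every $X\in\mathcal{M}$. Applying the naturality identity with $V=M$, $W=\id$, $g=Id_M$, and arbitrary $\psi\colon X\to M$, the left-hand side becomes $T^{\id}_M(Id_M)\circ\psi=\psi$ by stability, while the right-hand side is $T^{\id}_X(\psi)$. Now specializing the hexagon identity to $X=\id$ gives $T^{\id}_{V\otimes W}=T^W_V\circ T^V_W$, and the previous step forces this composition to be the identity; interchanging $V$ and $W$ yields the other composition. So each $T^W_V$ is a bijection, each $\tau_{M,V}$ is an isomorphism by Yoneda, and $M\in \Zc_\mathcal{M}(\mathcal{M}^{op})$.

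The main difficulty I expect is purely bookkeeping: carefully justifying the translation of the hexagon into the identity involving $T^{W\otimes X}_V$ and $T^{X\otimes V}_W$, together with the associativity and unit coherences in $\mathcal{M}$ needed to set $X=\id$ in the final step. Once that is in hand, the argument is short and conceptual, relying only on naturality in $V$ plus the single equation $T^{\id}_M(Id_M)=Id_M$.
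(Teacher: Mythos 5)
Your proof is correct and follows essentially the same route as the paper: both arguments use Yoneda/naturality to upgrade stability to the statement that the composite $\Hom(X,M)\to\Hom(\id,X\la M)\to\Hom(\id,M\ra X)\to\Hom(X,M)$ is the identity for all $X$, then feed this into the hexagon to get that the two maps $\Hom(T\otimes V,M)\rightleftarrows\Hom(V\otimes T,M)$ compose to the identity, and conclude bijectivity by swapping $T$ and $V$. Your $T^W_V$ bookkeeping is just a more explicit rendering of the chains of adjunction isomorphisms the paper writes out directly.
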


\begin{proof}
	By Yoneda Lemma, the condition \eqref{stabilitycond} implies that for any $V \in \mathcal{M}$ the chain of maps:
	$$\Hom_\mathcal{M}(V,M)\simeq\Hom_\mathcal{M}(\id,V\la M)\to\Hom_\mathcal{M}(\id,M\ra V)\simeq \Hom_\mathcal{M}(V,M) $$
	is identity.
	For any $T \in \mathcal{M}$ consider:
\begin{align*}  \Hom_\mathcal{M} (T\otimes V, M) &\simeq \Hom_\mathcal{M} (T, V \la M) \to \Hom_\mathcal{M} (T,  M\ra V) \simeq \Hom_\mathcal{M} (V\otimes T, M)\\
&\simeq \Hom_\mathcal{M} (V, T \la M) \to \Hom_\mathcal{M} (V,  M\ra T) \simeq \Hom_\mathcal{M} (T\otimes V, M).
\end{align*}
	By 	 the observation above and hexagon axiom this composition must be identity.
	Hence the map $\Hom_\mathcal{M} (T, V \la M) \to \Hom_\mathcal{M} (T,  M\ra V) $ is injective and the map $\Hom_\mathcal{M} (V, T \la M) \to \Hom_\mathcal{M} (V,  M\ra T)$ is surjective. Because this is true for any $T, V \in \mathcal{M}$, we can switch them and get that
	$$\Hom_\mathcal{M} (T, V \la M) \xrightarrow{\tau \circ} \Hom_\mathcal{M} (T,  M\ra V)  $$
	is bijective.
	This is true for any $T \in \mathcal{M}$, so again by the Yoneda lemma, we get that
	$\tau_{M,V}: V \la M \to M\ra V$ is an isomorphism.
\end{proof}

Now we want to use this result to construct a symmetric 2-contratrace, and hence Hopf-cyclic cohomology.

\begin{lemma}
	\label{reptrace}
	Let $\mathcal{M}$ be a biclosed linear monoidal category. The category of $\Zc'_{\mathcal{M}}(\mathcal{M}^{op})$ is equivalent to the category of representable left exact symmetric $2$-contratraces on $\mathcal{M}$ via $$M\longleftrightarrow\Hom_\mathcal{M}(-,M).$$
\end{lemma}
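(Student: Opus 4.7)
The plan is to construct the equivalence via the Yoneda embedding $\Phi:\mathcal{M}^{op}\to\mathcal{M}^*$ sending $M\mapsto\Hom_\mathcal{M}(-,M)$, and then match the extra structures (bimodule action, 2-contratrace, symmetric vs.\ stability) on each side.

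First, I would note that $\Phi$ is fully faithful with essential image the representable functors, and that every representable functor is automatically left exact ($\Hom$ preserves limits in its second argument). Thus, forgetting the extra structures, $\mathcal{M}^{op}$ is already equivalent to the category of representable left exact functors as plain categories.

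Second, I would verify that $\Phi$ intertwines the $\mathcal{M}$-bimodule structures. The biclosed adjunctions give natural isomorphisms
\begin{align*}
V\la\Phi(M) &= \Hom_\mathcal{M}(-\ot V,M) \simeq \Hom_\mathcal{M}(-,V\la M) = \Phi(V\la M),\\
\Phi(M)\ra V &= \Hom_\mathcal{M}(V\ot -,M) \simeq \Hom_\mathcal{M}(-,M\ra V) = \Phi(M\ra V).
\end{align*}
By the Yoneda lemma, a natural family $\iota_V:V\la\Phi(M)\to\Phi(M)\ra V$ in $\mathcal{M}^*$ corresponds bijectively to a natural family $\tau_V:V\la M\to M\ra V$ in $\mathcal{M}^{op}$, and $\iota_V$ is an isomorphism iff $\tau_V$ is. The hexagon axioms transfer under this correspondence, so weak (resp.\ strong) center structures on $\Phi(M)$ correspond to weak (resp.\ strong) center structures on $M$.

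Third, and most subtle, I would match the symmetric condition on $\Phi(M)$ with the stability condition on $M$. For the forward direction, specialize the symmetric identity $F(V\ot W)\to F(W\ot V)\to F(V\ot W)=\text{id}$ to the case $V=\id$, and use the unit coherence $\iota_\id=\text{id}$; this forces $\iota_W$ at $\id$ to be the identity natural endomorphism of $\Phi(M)$. Since a natural endomorphism of a representable functor is determined by its value on $\text{id}_M$, this is precisely the condition that the chain \eqref{stabilitycond} fixes $\text{id}_M$, i.e.\ stability. For the converse, stability combined with Lemma \ref{weakstrong} promotes $M$ to a strong center element, so $\tau$ is an isomorphism; the full pair-indexed symmetric condition then propagates from the single-object stability via the hexagon axiom and the naturality of $\tau$, by an argument parallel to that in Lemma \ref{weakstrong}.

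The main obstacle is the third step: bridging the pair-indexed symmetric condition with the single-object stability. The critical ingredient is Lemma \ref{weakstrong}, which already handles the analogous upgrade from single-object stability to the full strong-center structure. With this in hand, matching of morphisms on both sides is automatic from Yoneda and the defining compatibility of morphisms of center objects, completing the equivalence.
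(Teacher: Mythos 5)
Your proposal is correct and takes essentially the route the paper intends: its own proof is only a citation of Lemma \ref{weakstrong} and of \cite{s}, and the deferred content is exactly your Yoneda/adjunction bookkeeping together with the observation that the symmetry condition specialized at $V=\id$ is the stability condition, with Lemma \ref{weakstrong} supplying both the invertibility of $\tau$ and the converse implication. Indeed, the composite $\Hom_\mathcal{M}(T\otimes V,M)\to\Hom_\mathcal{M}(V\otimes T,M)\to\Hom_\mathcal{M}(T\otimes V,M)$ shown to be the identity in the proof of Lemma \ref{weakstrong} is, under the identifications of your second step, precisely the symmetry condition for the $2$-contratrace $\Hom_\mathcal{M}(-,M)$, so your third step is already contained there.
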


\begin{proof}
	Follows from the previous Lemma. See \cite{s} for details.
\end{proof}

\section{Hopf algebras}\label{s:Hopf}
We begin by considering the Hopf cyclic cohomology with contramodule coefficients in the case of Hopf algebras with \emph{invertible} antipode. Even though this case is well treated in \cite{contra}, our constructions become especially clear in this setting.  They serve a motivational purpose for the two cases of actual interest in this paper; namely that of quasi-Hopf algebras and Hopf algebroids.

Let $H$ be a Hopf algebra (we will always assume the bijectivity of the antipode in this section) over a field $k$. The category of $H$-modules, $\hmod$, is monoidal.  It is well-known that the category is also biclosed. We are going to include a  general proof (of a known fact \cite{BPO_closed, majid_closed, SS_closed}) that the modules over a quasi-Hopf algebra form a biclosed category in Section \ref{s:qHopf}.
For any $M,  N\in \hmod$,  we can define the left internal Hom by
\begin{equation}
\Hom^l(M, N)=\Hom_k(M, N),\quad h\cdot \varphi = h^1\varphi(S(h^2)-),
\end{equation}
and right internal Hom by
\begin{equation}
\Hom^r(M, N)=\Hom_k(M, N),\quad h\cdot \varphi = h^2\varphi(S^{-1}(h^1)-).
\end{equation}

Using the adjunctions for $M, V, W\in \hmod$:
$$ \ \Hom_\hmod (W\otimes V, M) \simeq \Hom_\hmod (W, \Hom^l(V,M)),$$ and
$$\Hom_\hmod (V\otimes W, M) \simeq \Hom_\hmod (W, \Hom^r(V,M)),$$
we can introduce the contragradient $\hmod$-bimodule category $\hmod^{op}$ as in Section \ref{s:General}.
Specifically, for $M\in\hmod^{op}$ and $V\in\hmod$, the action is given by:
\begin{equation}
M\ra V= \Hom^r(V, M), \quad ~~ \text{and} \quad ~~ V\la M=\Hom^l(V, M)  .
\end{equation}

The definition of a contramodule over a coalgebra was given first in \cite{EM_contra}, the definition of $aYD$-contramodules for a Hopf algebra was given in \cite{contra}, we recall it below.

\begin{definition}
	\label{Hopf:contra}
	Let $(H, \Delta, \varepsilon)$ be a coalgebra over $k$. A vector space $M$ is called a (right) $H$-contramodule, if there is a $k$-linear map $\mu: \Hom_k (H, M) \rightarrow M $, called contraaction, such that the following diagrams commute:
	\begin{center}
		\begin{tikzpicture}
		\matrix (m) [matrix of math nodes, row sep=4em, column sep=8em]
		{\Hom (H , \Hom(H , M))& & \Hom(H , M) \\
			\Hom((H\otimes H) , M)& \Hom(H,M) & M \\};
		\draw[->] (m-1-1) to node {$\Hom(H,\mu) $} (m-1-3);
		\draw[->] (m-1-3) to node {$\mu$} (m-2-3);
		\draw[->] (m-2-2) to node {$\mu$} (m-2-3);
		\draw[->] (m-1-1) to node {$\eta$} (m-2-1);
		\draw[->](m-2-1) to node {$\Hom(\Delta, M) $}(m-2-2);
		\end{tikzpicture}
	\end{center}
	where $\eta(f)(x\otimes y) = (f(x))(y)$ is a $k$-adjunction isomorphism;
	
	\begin{center}
		\begin{tikzpicture}
		\matrix (m) [matrix of math nodes, row sep=4em, column sep=8em]
		{\Hom(k , M) & \Hom(H , M) \\
			M \\};
		\draw[->] (m-1-1) to node {$\Hom(\varepsilon, M) $} (m-1-2);
		\draw[->] (m-1-2) to node {$\mu$} (m-2-1);
		\draw[->] (m-1-1) to node {$\simeq$} (m-2-1);
		
		\end{tikzpicture}
	\end{center}
	
A morphism of contramodules $(M, \mu) \rightarrow (N, \nu)$	is a map of vector spaces $t: M \rightarrow N$, such that the following diagram commutes:
\begin{center}
	\begin{tikzpicture}
	\matrix (m) [matrix of math nodes, row sep=4em, column sep=8em]
	{\Hom(H , M) & \Hom(H , N) \\
		M & N \\};
	\draw[->] (m-1-1) to node {$\Hom(H, t) $} (m-1-2);
	\draw[->] (m-1-2) to node {$\nu$} (m-2-2);
	\draw[->] (m-1-1) to node {$\mu$} (m-2-1);
	\draw[->] (m-2-1) to node {$t$} (m-2-2);
	\end{tikzpicture}
\end{center}

\end{definition}

\begin{definition}
	Let $H$ be a Hopf algebra over $k$. A $k$-vector space $M$ is called an  anti-Yetter-Drinfeld contramodule ($aYD$-contramodule), if
	\begin{itemize}
		\item $M$ is a left $H$-module,
		\item $M$ is a  $H$-contramodule,
		\item For all $h\in H$ and any linear map $f\in \Hom(H, M)$ there is a compatibility condition:
		\begin{equation}
			\label{Hopf:aydeq}h(\mu(f))=\mu(h^2 f(S(h^3)-h^1)).
		\end{equation}
	\end{itemize}

It is called stable ($saYD$-contramodule), if for all $m\in M$ we have $\mu(r_m)=m$ where $r_m(h)=hm$.

A morphism of $aYD$-contramodules $M \rightarrow N$ is simultaneously a morphism of $H$-modules and $H$-contramodules.
\end{definition}

Because the category $\hmod$ is a biclosed monoidal, we can define the contragradient $\hmod$-bimodule category $\hmod^{op}$ as in Section \ref{s:General}, and describe its center.

\begin{proposition}
	\label{Hopf:aydprop}
	The category of $aYD$-contramodules for a Hopf algebra $H$, with an invertible antipode $S$, is isomorphic to $\Zc_\hmod(\hmod^{op})$.
	
\end{proposition}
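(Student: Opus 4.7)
The plan is to match each center element $(M, \tau)$ with an aYD-contramodule structure by identifying the single map $\mu(g) := \tau_{M,H}(g)(1)$, which carries all the information of $\tau$.

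First I would deduce from naturality of $\tau$ with respect to the $H$-linear maps $f_v: H \to V$, $h \mapsto hv$, the universal formula
$$\tau_{M,V}(\varphi)(v) = \mu(h \mapsto \varphi(hv)),$$
so that specifying $\tau$ is the same as specifying $\mu$. It then remains to translate the three center axioms — $H$-linearity of each $\tau_{M,V}$, the hexagon, and invertibility of each $\tau_{M,V}$ — into conditions on $\mu$.

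The hexagon at $V = W = H$, evaluated at $v = w = 1$ after the biclosed identifications $(H \otimes H) \la M \cong H \la (H \la M)$ and $M \ra (H \otimes H) \cong (M \ra H) \ra H$, unravels to the contramodule associativity $\mu(\mu \circ F) = \mu(\eta(F) \circ \Delta)$ of Definition \ref{Hopf:contra}, and the unit axiom for $\mu$ falls out analogously at $V = k$. $H$-linearity of $\tau_{M,V}$ (equivalent to its instance at $V = H$, $v = 1$) becomes
$$\mu(h' \mapsto h_{(1)} \psi(S(h_{(2)}) h')) = h_{(2)} \, \mu(h' \mapsto \psi(h' S^{-1}(h_{(1)}))),$$
and is equivalent to the aYD compatibility \eqref{Hopf:aydeq} via the identity $\sum h_{(2)} S^{-1}(h_{(1)}) \otimes h_{(3)} \otimes h_{(4)} = 1 \otimes \Delta h$ (obtained from $\Delta^3 h$ by coassociativity together with $a_{(2)} S^{-1}(a_{(1)}) = \varepsilon(a)$).

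Invertibility of $\tau_{M,V}$, required because we work in the strong center, is where the bijectivity of $S$ enters: I would write down the explicit inverse
$$\sigma_{M,V}(\psi)(v) := \mu(h \mapsto \psi(S^{-1}(h) v)),$$
and verify $\sigma \circ \tau = \mathrm{id}$ and $\tau \circ \sigma = \mathrm{id}$ using contramodule associativity, the unit axiom, and the antipode identities $h_{(2)} S^{-1}(h_{(1)}) = \varepsilon(h) = S^{-1}(h_{(2)}) h_{(1)}$. Morphism compatibility is automatic: an intertwiner $t$ of the $\tau$'s, specialized to $V = H$, $v = 1$, becomes the defining relation $t \circ \mu = \mu' \circ \Hom_k(H, t)$ of a contramodule morphism, while $H$-linearity of $t$ is part of the data in either description. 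The principal obstacle is the Sweedler-notation bookkeeping, especially the translation between $H$-linearity of $\tau$ and the aYD compatibility.
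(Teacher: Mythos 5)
Your proposal is correct and follows essentially the same route as the paper: the correspondence $\mu = ev_1\circ\tau_H$ with reconstruction $\tau_{M,V}(\varphi)(v)=\mu(h\mapsto\varphi(hv))$ via naturality against $h\mapsto hv$, the explicit inverse $\sigma$ (the paper's $\theta$), the reduction of the hexagon and unit axioms to the contramodule axioms, and the translation of $H$-linearity of $\tau$ into the intermediate condition \eqref{Hopf:aydeq2}, whose equivalence with \eqref{Hopf:aydeq} the paper records in the lemma following Corollary \ref{Hopf:saydcor} using exactly the antipode identities you cite. The only difference is organizational: you derive the universal formula first and specialize every axiom to $V=H$, $v=1$, whereas the paper verifies the axioms for general $V,W$ and invokes the $V=H$ specialization only for the converse direction.
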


\begin{proof}
	Let $(M, \mu)$ be an $aYD$-contramodule, for any $H$-module $V$ define $\tau:V\la M\to M\ra V$ by $$\tau(\phi)(v)=\mu(h\mapsto\phi(hv)),$$
	where $\phi \in \Hom^l(V,M), \ v \in V, h\in H$.
	
	Define $\theta: M\ra V\to V\la M$ by $$\theta(\phi)(v)=\mu(h\mapsto\phi(S^{-1}(h)v)),$$
	where $\phi \in \Hom^r(V,M), \ v \in V, h\in H$.
	
	Then ($x\in H$)
	 \begin{align*}(\theta\circ\tau(\phi))(v)&=\mu(h\mapsto(\tau\phi)(S^{-1}(h)v))=\mu(h\mapsto\mu(x\mapsto\phi(xS^{-1}(h)v)))\\&=\mu(h\mapsto\phi(h^2S^{-1}(h^1)v))=\mu(h\mapsto \varepsilon(h)\phi(v))=\phi(v).
	\end{align*}
	And similarly $\tau\theta\phi=\phi$, so that $\tau$ is an isomorphism (of vector spaces so far).
	
	Now for $x \in H$ let us compare $(x\tau\phi)(v)$ to $(\tau x\phi)(v)$ using \eqref{Hopf:aydeq}:
	 \begin{align*}(x\tau\phi)(v)&=x^2\mu(h\mapsto\phi(hS^{-1}(x^1)v))=\mu(h\mapsto x^3\phi(S(x^4)hx^2 S^{-1}(x^1)v))\\
	&=\mu(h\mapsto x^2\phi(S(x^3)h\varepsilon(x^1)v))=\mu(h\mapsto x^1\phi(S(x^2)hv))=(\tau x\phi)(v)
	\end{align*} so that $\tau$ is an isomorphism in $\hmod$.
	
	Let $f\in\Hom_H(V,V')$ then $$\tau(\phi\circ f)(v)=\mu(h\mapsto \phi(f(hv)))=\mu(h\mapsto\phi(hf(v)))=(\tau\phi\circ f)(v)$$ so that $\tau$ is functorial in $V$.
	
	Next, to verify the commutativity of \begin{equation}\label{Hopf:assoc}\xymatrix{W\la(V\la M)\ar[r]^{Id\la\tau}& W\la(M\ra V)\ar[r]^{Id}& (W\la M)\ra V\ar[r]^{\tau\ra Id}  &(M\ra W)\ra V\ar[d]\\
		(W\ot V)\la M\ar[u]\ar[rrr]^{\tau} & & & M\ra(W\ot V) \\
	}\end{equation} is to check that taking $f$ along the second row to $w\ot v\mapsto\mu(h\mapsto f(h^1 w\ot h^2 v))$ is the same as taking it the long way around, which is $w\ot v\mapsto\mu(x\mapsto\mu(h\mapsto f(xw\ot hv)))$.  Those two are equal by the contraaction condition.  Finally, to show that $k\la M\to M\ra k$ is the identity observe that if, for $m\in M$ we define a function, also called $m: k\to M$, as follows:  $m(c)=cm$, then \begin{equation}
	\label{Hopf:unit}\tau m(c)=\mu(h\mapsto m(hc))=\mu(h\mapsto\epsilon(h)cm)=cm.\end{equation}
	
Furthermore, if $g:M\to M'$ is a map of $aYD$-contramodules then $$(g\circ\tau\phi)(v)=g(\mu(h\mapsto\phi(hv)))=\mu(h\mapsto g(\phi(hv)))=\tau(g\circ\phi)(v)$$ so that $g$ is a morphism in $\Zc_\hmod(\hmod^{op})$.

	What has been shown so far is that if $M$ is an $aYD$-contramodule, then $(M,\tau)\in\Zc_\hmod(\hmod^{op})$ and any $g:M\to M'$ a morphism of $aYD$-contramodules induces a morphism between the corresponding central elements.

	Conversely, given a central element  $(M,\tau)\in\Zc_\hmod(\hmod^{op})$, take $\mu$ to be the composition of $\tau:\Hom^l(H,M)\to\Hom^r(H,M)$ and $ev_1:\Hom^r(H,M)\to M$.    Observe that the considerations \eqref{Hopf:assoc} and \eqref{Hopf:unit} are of the if and only if kind, so that $\mu$ is a contraaction.  Furthermore, the $aYD$ condition is satisfied.  More precisely, let $x\in H$ and $f\in\Hom(H,M)$, note that $\tau(f(-h))=(\tau f)(-h)$ since $r_h:H\to H$ is a morphism in $\hmod$, then
	\begin{align*}
	\mu(x^2 f(S(x^3)-x^1))&=ev_1\tau(x^2 f(S(x^3)-x^1))=ev_1\tau((x^2\cdot f)(-x^1))\\ &=ev_1 x^2\cdot\tau(f(-x^1))=ev_1 x^2\cdot(\tau f)(-x^1)=ev_1 x^3(\tau f)(S^{-1}(x^2)-x^1)\\
	&=x^3(\tau f)(S^{-1}(x^2)x^1)=x(\tau f)(1)=x ev_1\tau f=x\mu(f).\\
	\end{align*}
	
	Finally, suppose that $f:M\to M'$ is a map in the center, then we have $$\xymatrix{H\la M\ar[d]_{Id\la f}\ar[r]^{\tau}& M\ra H\ar[d]_{f\ra Id}\ar[r]^{ev_1}& M\ar[d]_f\\
		H\la M'\ar[r]^\tau & M'\ra H\ar[r]^{ev_1} & M\\
	}$$ where the left square commutes by the centrality of $f$ and the right square commutes obviously.  Since the top row is $\mu_M$ and the bottom row is $\mu_{M'}$ so $f$ is a map of $aYD$-contramodules.
	
	Note that the constructions of central elements from $aYD$-contramodules and vice versa are inverses of each other. Indeed, the direction $\mu \rightarrow \tau \rightarrow \mu$  is obvious. The second direction  $\tau \rightarrow \mu \rightarrow \tau$  follows from the observation that $\tau_H(f(-h))=(\tau_H f)(-h)$. Thus we have proved the isomorphism of categories as claimed.
\end{proof}

\begin{remark}
	\label{weak_vs_strong}
Note that $\theta$, the inverse of $\tau$ defined in the above proof is easily written down.  This is not the case in the instance of quasi-Hopf algebras nor Hopf algebroids thus making it necessary to deal with the weak center, at least until the imposition of stability.  We do not know if in the presence of an invertible antipode the weak center coincides with the center for quasi-Hopf algebras and Hopf algebroids.  The notion of stability, required for cyclic cohomology, allows us to side-step the issue.
\end{remark}
In Definition \ref{d:Z'} we let $\Zc'_\hmod(\hmod^{op})$ be the full subcategory of the weak center that consists of objects such that the identity map $Id\in\Hom_H(M,M)$ is mapped to same via
 \begin{equation}
\label{Hopf:stabilitycond}\Hom_H(M,M)\simeq\Hom_H(\id,M\la M)\to\Hom_H(\id,M\ra M)\simeq \Hom_H(M,M).
\end{equation}  Recall that Lemma \ref{weakstrong} shows that $\Zc'_\hmod(\hmod^{op})$ is actually a full subcategory of the strong center, and so the middle map above is an isomorphism.

We immediately obtain the following Corollary of Proposition \ref{Hopf:aydprop}.

\begin{corollary}\label{Hopf:saydcor}
	The category of $saYD$-contramodules for $H$ is equivalent to $\Zc'_\hmod(\hmod^{op})$.
\end{corollary}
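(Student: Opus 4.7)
The plan is to deduce this corollary directly from Proposition \ref{Hopf:aydprop} (identifying $aYD$-contramodules with the strong center $\Zc_\hmod(\hmod^{op})$) together with Lemma \ref{weakstrong}, which tells us that any object of $\Zc'_\hmod(\hmod^{op})$ automatically lies in the strong center. Thus the only thing to verify is that, under the bijection of Proposition \ref{Hopf:aydprop}, the stability condition of Definition \ref{d:Z'} corresponds precisely to the $saYD$ stability condition $\mu(r_m)=m$.

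First I would unpack the two isomorphisms in \eqref{Hopf:stabilitycond}. The internal-hom description $M\la M=\Hom^l(M,M)$ and $M\ra M=\Hom^r(M,M)$ underlies $\Hom_k(M,M)$ with two different $H$-actions, and an $H$-invariant element of either is exactly an $H$-linear endomorphism of $M$; so the canonical isomorphisms $\Hom_H(M,M)\simeq\Hom_H(\id,M\la M)$ and $\Hom_H(\id,M\ra M)\simeq\Hom_H(M,M)$ both send $f\mapsto\widetilde f$, where $\widetilde f(1)=f$. Consequently the composition in \eqref{Hopf:stabilitycond} is simply $f\mapsto\tau_{M,M}(f)$, and the defining property of $\Zc'$ becomes the requirement that $\tau_{M,M}(Id_M)=Id_M$ in $\Hom_H(M,M)$.

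Next I would plug in the explicit formula for $\tau$ obtained in the proof of Proposition \ref{Hopf:aydprop}, namely $\tau(\phi)(v)=\mu(h\mapsto\phi(hv))$. Setting $\phi=Id_M$ gives
\begin{equation*}
\tau(Id_M)(m)=\mu(h\mapsto hm)=\mu(r_m),
\end{equation*}
so the centre-theoretic stability condition $\tau(Id_M)=Id_M$ is literally the $aYD$-contramodule stability condition $\mu(r_m)=m$ for all $m\in M$. Combined with Proposition \ref{Hopf:aydprop}, this yields the claimed equivalence of categories, since morphisms are unchanged on either side.

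I do not expect any real obstacle here; the whole argument is bookkeeping once one carefully identifies the two isomorphisms flanking $\tau$ in \eqref{Hopf:stabilitycond} with evaluation-at-$1$. The only mildly delicate point is remembering that, thanks to Lemma \ref{weakstrong}, we are entitled to treat $\tau_{M,M}$ as an isomorphism (hence as a genuine element of $\Hom_H(M,M)$ on the target side) rather than merely a morphism in the weak centre.
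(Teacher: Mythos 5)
Your proposal is correct and follows essentially the same route as the paper: both reduce the stability condition of Definition \ref{d:Z'} to $\tau(Id_M)=Id_M$ and then compute $\tau(Id_M)(m)=\mu(h\mapsto hm)=\mu(r_m)$, so that stability in the centre is literally the $saYD$ condition. The only cosmetic difference is that the paper verifies the converse direction by invoking the Yoneda-upgraded condition \eqref{Hopf:fullstabcond} with $V=H$ applied to $r_m\in\Hom_H(H,M)$, whereas you observe directly that the computation is an equivalence; both are valid.
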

\begin{proof}
	Let $M$ be an $aYD$-contramodule, then to ensure that the condition \eqref{Hopf:stabilitycond} is satisfied, we need that $$\tau(Id)(m)=\mu(h\mapsto hm)=\mu(r_m)=m$$ which is exactly the $saYD$ condition.  On the other hand, if $M$ is a central element that satisfies \eqref{Hopf:stabilitycond}, then it also satisfies that the chain of isomorphisms \begin{equation}\label{Hopf:fullstabcond}\Hom_H(V,M)\simeq\Hom_H(\id,V\la M)\simeq\Hom_H(\id,M\ra V)\simeq \Hom_H(V,M) \end{equation} is identity for any $V\in\hmod$.  Take $V=H$ and note that $r_m\in\Hom_H(H,M)$ so that \eqref{Hopf:fullstabcond} implies that $\tau(r_m)=r_m$ and so $\mu(r_m)=ev_1\tau(r_m)=ev_1(r_m)=m$, i.e., $M$ is a stable $aYD$-contramodule.
\end{proof}

Let's prove an easy lemma which will motivate the definition of the $aYD$-condition in the case of Hopf algebroids.

\begin{lemma}
	Let $H$ be a Hopf algebra with an invertible antipode, $M$ be a left $H$-module and $H$-contramodule. Then equation \eqref{Hopf:aydeq} is equivalent to:
	\begin{equation}
	\label{Hopf:aydeq2}
	h^2 (\mu(f(-S^{-1}(h^1))))=\mu(h^{1} f(S(h^{2})-))
	\end{equation}
	for all $h \in H$, and $f \in \Hom_k(H,M)$.
\end{lemma}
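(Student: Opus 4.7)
The plan is to prove both implications by a single substitution, exploiting the antipode identities that the invertibility of $S$ makes available:
\[
\sum h^2 S^{-1}(h^1) = \varepsilon(h) = \sum S^{-1}(h^2) h^1,
\]
both of which hold because $S^{-1}$ is the antipode of $H^{\mathrm{cop}}$.

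For \eqref{Hopf:aydeq} $\Rightarrow$ \eqref{Hopf:aydeq2}, I would apply \eqref{Hopf:aydeq} to the element $h^2$ (where $\Delta(h)=h^1\otimes h^2$) and to the linear map $\tilde f(x):=f(x S^{-1}(h^1))$, with $h^1$ treated as a parameter summed through by Sweedler. The left-hand side is immediately the LHS of \eqref{Hopf:aydeq2}. The right-hand side, when expanded using coassociativity to rewrite $\Delta^2(h^2)$ as part of $\Delta^3(h)$, produces a factor $h^2 S^{-1}(h^1)$ inside $f$; this collapses to $1$ via the first antipode identity above, and applying the counit to absorb the corresponding Sweedler index then leaves exactly $\mu(h^1 f(S(h^2)-))$, the RHS of \eqref{Hopf:aydeq2}.

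The reverse direction is structurally symmetric: apply \eqref{Hopf:aydeq2} to $h^2$ and to $\tilde f(x):=f(x h^1)$. On the left-hand side, the factor $S^{-1}(h^2) h^1$ now produced inside $f$ (this time from rewriting $\Delta(h^2)$ as part of $\Delta^2(h)$) collapses via the second antipode identity and the counit, yielding $h\mu(f)$. The right-hand side reads off directly as $\mu(h^2 f(S(h^3) - h^1))$, the RHS of \eqref{Hopf:aydeq}, without further manipulation.

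The only obstacle is bookkeeping with iterated coproducts: one has to invoke coassociativity carefully to recognize $\Delta(h^2)$ inside $\Delta^2(h)$, respectively $\Delta^2(h^2)$ inside $\Delta^3(h)$, and then correctly identify which Sweedler components get absorbed by the counit after the antipode identities have been applied. The bijectivity of $S$ enters in exactly one place, namely to ensure the availability of the ``reversed'' antipode identities involving $S^{-1}$.
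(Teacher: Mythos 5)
Your proposal is correct and follows essentially the same route as the paper: each direction is obtained by applying the assumed identity to the second Sweedler leg of $h$ against the twisted map $f(\cdot\, S^{-1}(h^1))$, respectively $f(\cdot\, h^1)$, and then collapsing the resulting factor $h^2 S^{-1}(h^1)$, respectively $S^{-1}(h^2)h^1$, via the $S^{-1}$-antipode identities and the counit. The Sweedler bookkeeping you flag as the only obstacle is exactly what the paper's two-line computations carry out.
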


\begin{proof}
	Assume that \eqref{Hopf:aydeq} holds. Then consider the left hand side of \eqref{Hopf:aydeq2}:
	\begin{align*}
	h^2 (\mu(f(-S^{-1}(h^1)))) & \stackrel{\text{(i)}}{=} \mu (h^3f(S(h^4)-h^2S^{-1}(h^1)))\\
	& = \mu (h^2f(S(h^3)-\varepsilon(h^1)))  = \mu(h^{1} f(S(h^{2})-)) ,
	\end{align*} where (i) is equation \eqref{Hopf:aydeq}.

	Now assume that \eqref{Hopf:aydeq2} holds. Then:
		\begin{align*}
		\mu(h^2 f(S(h^3)-h^1)) & \stackrel{\text{(i)}}{=} h^3\mu (f(-S^{-1}(h^2)h^1))\\
		& = h^2\mu (f(-\varepsilon(h^1)))  = h\mu(f) ,
		\end{align*} where (i) is equation \eqref{Hopf:aydeq2}.

\end{proof}

\section{Quasi-Hopf algebras}\label{s:qHopf}
 Let us remind the reader of all the necessary definitions following \cite{Drinfeld}. In this section $k$ is a field.
 \begin{definition}
A quasi-bialgebra is a collection $(A, \Delta, \varepsilon, \Phi)$, where $A$ is an associative $k$-algebra with unity,  $\Delta: A \rightarrow A\otimes A$ and  $\varepsilon: A \rightarrow k $ are homomorphisms of algebras, $\Phi \in A \otimes A\otimes A$ is an invertible elements,  such that the following equalities hold:
\begin{equation}
\label{coassoc}
(\text{id} \otimes \Delta) \ (\Delta (a) ) = \Phi \cdot ( (\Delta \otimes \text{id}) \ (\Delta(a) ) ) \cdot \Phi^{-1} \qquad \forall a \in A
\end{equation}

\begin{equation}
\label{phi}
\lbrack (id \otimes id \otimes \Delta)(\Phi) \rbrack \cdot \lbrack (\Delta \otimes id \otimes id)(\Phi) \rbrack = (1 \otimes \Phi) \cdot \lbrack (id \otimes \Delta \otimes id)(\Phi) \rbrack \cdot (\Phi \otimes 1)
\end{equation}

\begin{equation}
\label{one}
(\varepsilon \otimes id ) \ (\Delta (a)) =  a, \qquad (id \otimes \varepsilon ) \ (\Delta (a)) =  a, \qquad \forall a \in \mathcal{A}
\end{equation}

\begin{equation}
\label{unass}
id \otimes \varepsilon \otimes id \ (\Phi) = 1 \otimes 1
\end{equation}
 \end{definition}

 \begin{remark}
 	\label{Sweedler}
 	In this paper we will use the Sweedler notation. Let's denote
 	 \begin{equation}
 	 \Phi = X \otimes Y \otimes Z , \\
 	  	 \end{equation}
 	  \begin{equation}
 	  \Phi^{-1} = P \otimes Q \otimes R , \\
 	  \end{equation}
 	here we mean the summation. In particular, the equality \eqref{coassoc} can be written as:
 	$$a^1\otimes a^{21}\otimes a^{22} = Xa^{11}P\otimes Ya^{12}Q \otimes Z a^2 R. $$

 \end{remark}

We are interested in the category of left $A$-modules $_A\mathcal{M}$. It was proved in \cite{Drinfeld} that this category is monoidal if a tensor product of two left $A$-modules $M$ and $N$ is defined by the same formula as in the case of a bialgebra:
\begin{equation}
M \otimes N = M \otimes_k N, \quad a\cdot (m\otimes n) = a^1m\otimes a^2n.
\end{equation}

 The associativity morphism is no longer trivial as it was in the case of a bialgebra. If one sets the associativity morphism
$(M \otimes N)\otimes L  \rightarrow M \otimes (N\otimes L)$ to be the image of $\Phi$ in $\text{End}_k(M \otimes N \otimes L)$, then it becomes an isomorphism of left $A$-modules by \eqref{coassoc}. Consider $k$ as an $A$-module by $a \cdot 1 = \varepsilon(a) 1 $ as in the bialgebra case. Then one defines a morphism $\lambda_M: k \otimes M \rightarrow M $ as the usual morphism of $k$-modules. Then $\lambda_M$ becomes an $A$-module morphism by \eqref{one}. Similarly one can define a morphism $\rho_M: M\otimes k \rightarrow M $. So $k$ is a unit of a monoidal category $_A\mathcal{M}$.

\begin{remark}
	The equality \eqref{phi} is equivalent to the pentagon axiom. Associativity and unit in the category respect each other by \eqref{unass}.
	
\end{remark}

Recall the definition of a quasi-Hopf algebra from \cite{Drinfeld}.

\begin{definition}
	Let $(H, \Delta, \varepsilon, \Phi)$ be a quasi-bialgebra. Then it is called a quasi-Hopf algebra if there exist $\alpha, \ \beta \in H $ and anti-automorphism $S: H \rightarrow H$, such that
	\begin{equation}
	\label{alpha}
	S(h^1)\alpha h^2 = \epsilon(h) \alpha ,
	\end{equation}
	\begin{equation}
	\label{beta}
	h^1 \beta S(h^2) = \epsilon(h) \beta .
	\end{equation}
	If one keeps notation as in Remark \ref{Sweedler}, then there should be equalities
	\begin{equation}
	\label{evcoev}
	X \beta S(Y) \alpha Z  = 1,
	\end{equation}
	\begin{equation}
	\label{coevev}
	S(P) \alpha Q \beta R  = 1.
	\end{equation}
	
\end{definition}

\begin{remark}
	We want to emphasize that the antipode $S$ in the definition above is assumed to be invertible.
\end{remark}

It was shown in \cite{Drinfeld}, that the category of $H$-modules for a quasi-Hopf algebra $H$, that are finite dimensional as $k$-vector spaces, is rigid. The more general result, that without the finite dimensionality condition  the category of $H$-modules is closed, was proved in \cite{BPO_closed, majid_closed, SS_closed}. To fix notation  and for the convenience of the reader we are going to include a sketch of a proof here.

\begin{proposition}
	\label{internalhom}
	The category $\hmod$ of left modules over a quasi-Hopf algebra $H$ is a biclosed monoidal category.
\end{proposition}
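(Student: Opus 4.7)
The plan is to construct the left and right internal homs directly on the underlying vector space $\Hom_k(V, M)$, equipped with twisted $H$-actions that specialize to the familiar Hopf-algebra formulas $(h\cdot\varphi)(v)=h^1\varphi(S(h^2)v)$ and $(h\cdot\varphi)(v)=h^2\varphi(S^{-1}(h^1)v)$ but are corrected by insertions of the associator $\Phi$, its inverse $\Phi^{-1}$, and the distinguished elements $\alpha,\beta$. The correct formulas can be derived, rather than guessed, by writing down candidate adjunction maps $\Hom_\hmod(W\ot V,M)\rightleftarrows\Hom_k(W,\Hom_k(V,M))$ modeled on the rigid case of \cite{Drinfeld}: the forward map sends an $H$-linear $f$ to $\tilde f(w)(v)=f(w\ot v)$ (possibly twisted by $\beta$), and the backward map uses an evaluation of the form $\varphi\ot v\mapsto\varphi(\alpha v)$ suitably reassociated. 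Demanding that $\tilde f$ land in $H$-linear maps then forces a unique action on $\Hom_k(V,M)$, and this is taken as the definition of $\Hom^l(V,M)$. The right internal hom $\Hom^r(V,M)$ is constructed in mirror-image fashion using $S^{-1}$ and $\Phi^{-1}$.

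With the action in hand, the next step is to verify that it really defines an $H$-module structure. The compatibility $(hh')\cdot\varphi=h\cdot(h'\cdot\varphi)$ is checked by expanding $(hh')^1\ot(hh')^2$ via the quasi-coassociativity \eqref{coassoc}, and then using the pentagon identity \eqref{phi} to rearrange the accumulated $\Phi$-factors into the form produced by $h\cdot(h'\cdot\varphi)$; the antipode relations \eqref{alpha} and \eqref{beta} are needed to move $\alpha$ and $\beta$ past elements of $H$. The unit axiom $1\cdot\varphi=\varphi$ follows from \eqref{one} together with \eqref{unass}. Finally, the adjunction isomorphisms
\[
\Hom_\hmod(W\ot V,M)\simeq\Hom_\hmod(W,\Hom^l(V,M)),\qquad
\Hom_\hmod(V\ot W,M)\simeq\Hom_\hmod(W,\Hom^r(V,M))
\]
are obtained by checking that the candidate forward and backward maps described above are mutually inverse; the two triangle-type identities \eqref{evcoev} and \eqref{coevev} are precisely what collapses the inserted $\alpha,\beta,X,Y,Z$ and the components $P,Q,R$ of $\Phi^{-1}$ to the identity.

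The main obstacle is notational rather than conceptual. Although the structural skeleton of the argument is dictated by the rigid case already established by Drinfeld, the absence of finite-dimensionality means one cannot reduce to dualizing a module and must instead work with $\Hom_k(V,M)$ directly. Consequently every verification is a lengthy Sweedler-notation calculation in which the placement of each $\Phi$-component and of each occurrence of $\alpha$ and $\beta$ matters, and the challenge is to pin down the correct action formula at the start so that every subsequent identity reduces cleanly to one of the seven quasi-Hopf axioms. Once the formula is settled, the verification is mechanical, which is why the result is well known to experts \cite{BPO_closed, majid_closed, SS_closed} even though a direct proof is typically avoided in the literature.
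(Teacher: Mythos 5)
Your proposal follows essentially the same route as the paper: take $\Hom_k(V,M)$ with a twisted $H$-action as the underlying object of each internal hom, build evaluation and adjunction maps decorated with $\Phi^{\pm 1}$, $\alpha$, $\beta$, and check they are mutually inverse using the quasi-Hopf axioms, exactly as in the paper's sketch. One small correction: in the formulas that actually work, the module actions on $\Hom^l$ and $\Hom^r$ are the \emph{uncorrected} Hopf-algebra formulas $h\cdot\varphi=h^1\varphi(S(h^2)-)$ and $h\cdot\varphi=h^2\varphi(S^{-1}(h^1)-)$, and all the $\Phi$-, $\alpha$-, $\beta$-insertions land in the evaluation and adjunction maps instead --- which is what your derivation-by-forcing-$H$-linearity would produce anyway.
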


 \begin{proof}
 	For any $M,  N\in \hmod$,  we can define the left internal Hom by
 	\begin{equation}
 	\label{lefthom}
 	\Hom^l(M, N)=\Hom_k(M, N),\quad h\cdot \varphi = h^1\varphi(S(h^2)-).
 	\end{equation}
 	Define also the evaluation morphism:
 	\begin{equation}
 	\label{ev_l}
 	\text{ev}^l:\ \Hom^l(M, N)\otimes M \rightarrow N,  	
 	\quad  \varphi \otimes m \mapsto X ( \varphi (S(Y)\alpha Z m) ) .
 	\end{equation}
 	
 	Notice that the morphism $\text{ev}^l$ is a morphism of left $H$-modules. Let's define an adjunction map:
 	\begin{equation}
 	 \zeta^l: \ \Hom_\hmod (M\otimes N, L) \rightarrow \Hom_\hmod (M, \Hom^l(N,L))
 	\end{equation}
 	by the rule:
 	\begin{equation}
 	f \mapsto (m \mapsto f(Pm\otimes Q\beta S(R) -)).
 	\end{equation}
 	
 	Observe that $\zeta^l(f)$ is a morphism of $H$-modules. Construct also a map in the opposite direction:
 	\begin{equation}
 	\eta^l: \ \Hom_\hmod (M, \Hom^l(N,L))\rightarrow  \Hom_\hmod (M\otimes N, L)
 	\end{equation}
 	by the rule:
 	\begin{equation}
 	 \eta^l(g) = \text{ev}^l \circ(g\otimes id).
 	  	\end{equation}
 It is a morphism 	of $H$-modules as a composition of such. It will be useful to write the formula:
 $$ \eta^l(g)(m\otimes n) = X(g(m)(S(Y)\alpha Z n)). $$
 	
 	Note that $\zeta^l$ and $\eta^l $ are mutually inverse and the category $\hmod$ is left closed.
\\

	For any $M,  N\in \hmod$,  we can define the right internal Hom by
	\begin{equation}
	\label{righthom}
	\Hom^r(M, N)=\Hom_k(M, N),\quad h\cdot \varphi = h^2\varphi(S^{-1}(h^1)-).
	\end{equation}
	Define also the evaluation morphism:
	\begin{equation}
	\label{ev}
	\text{ev}^r:\ M \otimes \Hom^r(M, N) \rightarrow N,  	
	\quad  m \otimes \varphi  \mapsto R ( \varphi (S^{-1}(Q)S^{-1}(\alpha) P m) ) .
	\end{equation}
	
	Let's define an adjunction map:
	\begin{equation}
	\label{r_adj}
	\zeta^r: \ \Hom_\hmod (N\otimes M, L) \rightarrow \Hom_\hmod (M, \Hom^r(N,L))
	\end{equation}
	by the rule:
	\begin{equation}
	f \mapsto (m \mapsto f(YS^{-1}(\beta)S^{-1}(X) - \otimes Zm)).
	\end{equation}
	
	As above $\zeta^r(f)$ is a morphism of $H$-modules.
	
	Construct also a map in the opposite direction:
	\begin{equation}
	\eta^r: \Hom_\hmod (M, \Hom^r(N,L)) \rightarrow  \Hom_\hmod (N\otimes M, L)
	\end{equation}
	by the rule:
	\begin{equation}
	\eta^r(g) = \text{ev}^r \circ (id \otimes g) ,
	\end{equation}
 the formula is: $  \eta^r(g)(n\otimes m) = R(g(m)(S^{-1}(Q)S^{-1}(\alpha)P n))$.

 Analogously to the case of left internal Hom one sees that $\eta^r$ and $\zeta^r$ are mutually inverse. This means that the category $\hmod$ is also right closed, and thus biclosed.  \end{proof}

\section{Anti-Yetter-Drinfeld contramodules for a quasi-Hopf algebra}\label{s:qHopf-ayd-contra}

We are going to define the anti-Yetter-Drinfeld contramodules using the categorical approach from Section \ref{s:General}.  The category $\hmod$ is  biclosed monoidal. Using the adjunctions for $M, V, W\in \hmod$:
$$ \ \Hom_\hmod (W\otimes V, M) \simeq \Hom_\hmod (W, \Hom^l(V,M)),$$ and
$$\Hom_\hmod (V\otimes W, M) \simeq \Hom_\hmod (W, \Hom^r(V,M)),$$
proved in Proposition \ref{internalhom}, we can introduce the contragradient $\hmod$-bimodule category $\hmod^{op}$ as in Section \ref{s:General}.
 Specifically, for $M\in\hmod^{op}$ and $V\in\hmod$, the action is given by:
\begin{equation}
M\ra V= \Hom^r(V, M), \quad ~~ \text{and} \quad ~~ V\la M=\Hom^l(V, M)  .
\end{equation}

For the Hopf-case we proved Proposition \ref{Hopf:aydprop}, that the center $\Zc_\hmod(\hmod^{op}) $ is the same as anti-Yetter Drinfeld contramodules. We want to generalize this fact to the quasi-Hopf case, i.e., we need to change the notion of a contramodule due to the noncoassociativity phenomenon. We will only prove that $aYD$-contramodules coincide with the weak center, but it is enough for our purposes, as was explained in Section \ref{s:General}.  Unlike the Hopf algebra setting there are two types of contramodules.  More precisely, there are two, in this case different, ways of unraveling the definition of the center into formulas.

We organize the material in the similar way to \cite{majid}.

\subsection{Anti-Yetter-Drinfeld contramodules I}

We begin with a lemma explaining the origin of the $aYD$-condition.

\begin{lemma}
	\label{aYD-contramodule lemma I}
	Let $M \in \hmod^{op}$. Natural transformations $\tau \in \text{Nat} (id \la M, M \ra id)$ are in 1-1 correspondence with $k$-linear maps $\mu: \Hom_k (H,M) \rightarrow M  $,  such that
	\begin{equation}
		\label{aYD-contramoduleI}
		h^2 (\mu(f(-S^{-1}(h^1))))=\mu(h^{1} f(S(h^{2})-)).
	\end{equation}
\end{lemma}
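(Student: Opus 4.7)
The plan is to exploit that both functors $V \mapsto V \la M = \Hom^l(V,M)$ and $V \mapsto M \ra V = \Hom^r(V,M)$ share the same underlying $k$-vector space $\Hom_k(V,M)$, but carry the two distinct $H$-actions specified in \eqref{lefthom} and \eqref{righthom}. A natural transformation $\tau$ between them is therefore a family of $H$-module maps, natural in $V$, and the critical reduction is that $\tau$ is determined by its component $\tau_H$ at the free module $H$: for each $v \in V$ the map $\tilde v\colon H \to V$, $h \mapsto hv$, is $H$-linear, so naturality of $\tau$ applied to $\tilde v$ and evaluation at $1 \in H$ yields
\[\tau_V(\phi)(v) \;=\; \tau_H(\phi \circ \tilde v)(1) \;=\; \tau_H\bigl(h \mapsto \phi(hv)\bigr)(1).\]

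This immediately suggests the bijection. Given $\tau$, set $\mu(f) := \tau_H(f)(1)$; this is visibly $k$-linear. Given $\mu$, define $\tau_V(\phi)(v) := \mu(h \mapsto \phi(hv))$; the displayed identity makes the two assignments mutually inverse, and naturality in $V$ of the $\tau_V$ built from $\mu$ follows at once by pushing $H$-linearity of any morphism $V \to V'$ inside the argument of $\mu$.

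The heart of the proof is the identification of $H$-linearity of $\tau_H$ with the condition \eqref{aYD-contramoduleI}. Unfolding $\tau_H(h \cdot f) = h \cdot \tau_H(f)$ using \eqref{lefthom} on the left-hand side and \eqref{righthom} on the right-hand side, and then evaluating at $1 \in H$, produces precisely \eqref{aYD-contramoduleI}. Conversely, for arbitrary $V$ the $H$-linearity of $\tau_V$ is a formal consequence of the $V=H$ case: computing directly one finds $\tau_V(h \cdot \phi)(v) = \mu\bigl(x \mapsto h^1 \phi(S(h^2) x v)\bigr)$ and $(h \cdot \tau_V(\phi))(v) = h^2 \mu\bigl(x \mapsto \phi(x S^{-1}(h^1) v)\bigr)$, and these coincide by \eqref{aYD-contramoduleI} applied to the function $x \mapsto \phi(xv)$.

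The technical point worth flagging is that the quasi-Hopf associator $\Phi$ (and the distinguished elements $\alpha,\beta$) do \emph{not} appear in this lemma: naturality and $H$-linearity are formulated entirely in terms of hom-sets and $H$-module morphisms, so the manipulations are formally identical to the Hopf case treated in Section \ref{s:Hopf}. I expect the genuine work involving $\Phi,\alpha,\beta$ to arise only in the subsequent results, where the hexagon axiom of the weak center will translate into a twisted contramodule associativity axiom on $\mu$ and the unit constraint must be unwound against \eqref{evcoev}--\eqref{coevev}.
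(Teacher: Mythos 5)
Your proposal is correct and follows essentially the same route as the paper: reduce $\tau$ to its component $\tau_H$ via naturality against the maps $h\mapsto hv$, set $\mu(f)=\tau_H(f)(1)$, and identify $H$-linearity of $\tau_H$ (unwound through \eqref{lefthom} and \eqref{righthom}, using the identity $(\tau_H f)(x)=\mu(f(-x))$) with condition \eqref{aYD-contramoduleI}. Your closing remark that $\Phi$, $\alpha$, $\beta$ enter only at the hexagon/unit stage also matches the paper's organization.
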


\begin{proof}
Consider $f \in \Hom_k(H,M)$, induce the $H$-module structure corresponding to the left internal homomorphism. Given a morphism $\tau_H : H \la M \rightarrow M \ra H $, define: $\mu(f) = \tau_H(f)(1) $, which we denote by $ev_1(\tau_H(f))$. Note that this is not a right ``categorical" evaluation! Let $x\in H$ and $f\in\Hom(H,M)$, observe that $\tau_H(f(-h))=(\tau_H f)(-h)$ since $r_h:H\to H$ is a morphism in $\hmod$. Now we can check the formula:
\begin{align*}
\mu(h^{1} f(S(h^{2})-))&=ev_1\tau_H (h^{1} f(S(h^{2})-)) =ev_1\tau_H(h \cdot f )=ev_1 h \cdot\tau_H(f)\\ & =ev_1 h^2 (\tau_H f)(S^{-1}(h^1)-)=h^2 (\tau_H f)(S^{-1}(h^1))\\ & = h^2 ev_1 \tau_H(f(-S^{-1}(h^1)))=  h^2 (\mu(f(-S^{-1}(h^1)))) .
\end{align*}

Conversely, given the map $\mu$, for any $V \in \hmod $ one can define the map $\tau_V: V \la M \rightarrow M \ra V $ by the rule $f \mapsto \mu(x\mapsto f(x-))$. It is a morphism of $H$-modules by \eqref{aYD-contramoduleI}.

Given an $H$-homomorphism $\varphi: V \rightarrow W $, consider the diagram:

 $$\xymatrix{\Hom^l(W,M) \ar[d]_{-\circ \varphi}\ar[r]^{\tau_W}& \Hom^r(W,M) \ar[d]_{-\circ \varphi }\\
 	\Hom^l(V,M) \ar[r]^{\tau_V} & \Hom^r(V,M). }$$
It commutes, so we have naturality.

These correspondences are mutually inverse. The only nontrivial direction follows from the observation that $\tau_H(f(-h))=(\tau_H f)(-h)$.
\end{proof}

Now we need to find a replacement for a contramodule condition. Let's  consider a hexagon axiom of the weak center:

\begin{small}
\begin{center}
	\begin{tikzpicture}
	\matrix (m) [matrix of math nodes, row sep=4em, column sep=8em]
	{V \la (W \la M)& V \la (M \ra W) & (V \la M) \ra W \\
		(V\otimes W) \la M& M \ra (V\otimes W) & (M \ra V) \ra W. \\};
	\draw[->] (m-1-1) to node {$id \la \tau_W $} (m-1-2);
	\draw[->] (m-1-2) to node {} (m-1-3);
	\draw[->] (m-1-3) to node {$\tau_V \ra id$} (m-2-3);
	\draw[->] (m-2-2) to node {} (m-2-3);
	\draw[->] (m-1-1) to node {} (m-2-1);
	\draw[->](m-2-1) to node {$\tau_{V\otimes W} $}(m-2-2);
	\end{tikzpicture}
\end{center}
\end{small}

 There are three associativity isomorphisms and we need explicit formulas for them:
\begin{equation*}
V \la (W \la M) \rightarrow (V \otimes W) \la M, \quad (v\mapsto (w\mapsto f_v(w))) \mapsto (v\otimes w \mapsto Xf_{S(Z)v}(S(Y)w));
\end{equation*}
\begin{equation*}
V \la (M \ra W) \rightarrow (V \la M) \ra W, \quad (v\mapsto (w\mapsto f_v(w))) \mapsto (w\mapsto (v\mapsto Yf_{S(Z)v}(S(X)w)));
\end{equation*}
\begin{equation*}
M \ra (V \otimes W)  \rightarrow (M \ra V) \ra W  , \quad (v\otimes w \mapsto  f(v\otimes w) ) \mapsto (w\mapsto (v\mapsto Zf(S(Y)v\otimes S(X)w)) .
\end{equation*}

Assume that $V = H = W$ in the hexagon diagram. After evaluating the resulting morphisms at $1$, we  obtain the equality:
\begin{align}
\label{quasi-contraI}
\mu (h \mapsto Y \mu(x \mapsto f_{S(Z)h}(xS(X)))) = Z \mu (h \mapsto f_{S(Z)h^1 S(Y)}(S(Y) h^2 S(X) ) ).
\end{align}

\begin{remark}
	Notice, that if $H$ is a Hopf algebra, then the condition \eqref{quasi-contraI} becomes:
	$$\mu (h \mapsto \mu (x\mapsto f_h(x))) = \mu (h \mapsto f_{h^1}(h^2)), $$
	which is exactly the right contramodule condition as used in \cite{contra} (see Definition \ref{Hopf:contra}).
\end{remark}

The unitality condition $\tau_k = id_M$  gives  the  following relation. For $f \in \Hom^l(k,M)$,
\begin{equation}
\label{contramodule_unitI}
f(1) = \mu (x \mapsto \varepsilon(x)f(1)).
\end{equation}

\begin{definition}
	Let $H$ be a quasi-Hopf algebra. A pair $(M, \mu)$, where $M$ is a left $H$-module and $\mu: \Hom(H,M)  \rightarrow M$ is a $k$-linear map,  is called a left-right anti-Yetter-Drinfeld contramodule of type I, if it satisfies the equalities \eqref{aYD-contramoduleI},  \eqref{quasi-contraI} and \eqref{contramodule_unitI}.
	
	A morphism of two $aYD$-contramodules $(M, \mu) \rightarrow (M', \mu')$ is an $H$-morphism $f: M\rightarrow M'$, which commutes with $\mu$ and $\mu'$.
\end{definition}

\begin{theorem}\label{aydpropI}
	The category of $aYD$-contramodules of type I for a quasi-Hopf algebra $H$ is isomorphic to $\text{w-}\Zc_\hmod(\hmod^{op})$.
\end{theorem}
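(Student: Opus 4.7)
The plan is to adapt the proof template of Proposition \ref{Hopf:aydprop} (the Hopf algebra case), now keeping track of the associators $\Phi = X\otimes Y\otimes Z$ and $\Phi^{-1} = P\otimes Q\otimes R$. By Lemma \ref{aYD-contramodule lemma I} we already have a bijection between families $\tau_H: H\la M \to M\ra H$ (equivalently, via naturality, families $\tau_V$ natural in $V$) and $k$-linear maps $\mu:\Hom_k(H,M)\to M$ satisfying \eqref{aYD-contramoduleI}. Hence what remains is to show that the two remaining axioms of the weak center (the hexagon and the unit condition) correspond exactly to \eqref{quasi-contraI} and \eqref{contramodule_unitI}, and that morphisms correspond.

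First I would establish that the hexagon on $(M,\tau)$ is equivalent to \eqref{quasi-contraI}. The forward implication is already essentially carried out in the excerpt: plug $V=W=H$ into the hexagon, use the three explicit associativity isomorphisms just written down, and evaluate at $1\otimes 1 \in H\otimes H$ to produce \eqref{quasi-contraI}. For the converse I would argue by naturality: for $v\in V$ and $w\in W$, the maps $r_v:H\to V$, $h\mapsto hv$ and $r_w:H\to W$, $h\mapsto hw$ are $H$-linear, so the commutative squares furnished by the naturality of $\tau$ (established in Lemma \ref{aYD-contramodule lemma I}) let us factor $\tau_V$ and $\tau_{V\otimes W}$ through $\tau_H$ and $\tau_{H\otimes H}$. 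Pairing this with the fact that evaluating a morphism in $\Hom^r(V,M)$ at an arbitrary $v$ is the same as first applying $-\circ r_v$ and then $ev_1$, the hexagon at arbitrary $(V,W)$ is reduced to the hexagon at $(H,H)$ evaluated at $1\otimes 1$, which is precisely \eqref{quasi-contraI}. The unit condition $\tau_k = \mathrm{id}_M$, unraveled via the isomorphism $\Hom^l(k,M)\cong M$ and the explicit formula for $\tau_H$, yields \eqref{contramodule_unitI}.

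Next I would verify that a morphism in $\mathrm{w}\text{-}\Zc_\hmod(\hmod^{op})$ between $(M,\tau)$ and $(M',\tau')$ is precisely an $H$-linear map $M\to M'$ commuting with $\mu$ and $\mu'$: for an $H$-linear $g:M\to M'$, the compatibility square with $\tau_V$ for all $V$ reduces, by the same naturality argument, to the single square at $V=H$, which after post-composing with $ev_1$ is exactly the compatibility of $g$ with $\mu$ and $\mu'$. Finally, the two assignments $(M,\tau)\leftrightarrow (M,\mu)$ are mutually inverse: the direction $\mu\to\tau\to\mu$ is immediate from $\mu=ev_1\circ\tau_H$ applied to the formula $\tau_V(f)=\mu(x\mapsto f(x-))$, while the direction $\tau\to\mu\to\tau$ uses, as in the Hopf algebra proof, the identity $\tau_H(f(-h)) = (\tau_H f)(-h)$ coming from naturality along $r_h:H\to H$.

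The main obstacle is the bookkeeping of the associators in the hexagon computation: the three associativity isomorphisms listed in the excerpt each contribute copies of $X,Y,Z$ that must be arranged so that the two paths around the hexagon, when evaluated at $1\otimes 1$, collapse precisely to the two sides of \eqref{quasi-contraI}. Conceptually this is routine once the formulas are in hand, but it is the step where all the quasi-bialgebra axioms \eqref{coassoc}--\eqref{unass}, together with \eqref{alpha}--\eqref{coevev}, must conspire correctly; the subsequent reduction from general $(V,W)$ to $(H,H)$ by naturality, while formally delicate, is essentially forced by the Yoneda-type observations already exploited in the proof of Lemma \ref{aYD-contramodule lemma I}.
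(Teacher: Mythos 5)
Your proposal is correct and follows essentially the same route as the paper: both rest on Lemma \ref{aYD-contramodule lemma I} for the correspondence $\tau\leftrightarrow\mu$, identify the hexagon at $(H,H)$ evaluated at $1\otimes 1$ with \eqref{quasi-contraI} and the unit axiom with \eqref{contramodule_unitI}, match morphisms via the $ev_1$ diagram, and check the two constructions are mutually inverse. If anything, you are more explicit than the paper about the converse direction (reducing the hexagon at general $(V,W)$ to the $(H,H)$ case by naturality along $r_v$ and $r_w$), a step the paper leaves implicit in the sentence ``Formula \eqref{quasi-contraI} gives the hexagon axiom for $\tau$.''
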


\begin{proof}
	We have seen that an object in the center gives an $aYD$-contramodule. Assume that we have a morphism of two central objects: $f: (M,\tau) \rightarrow (M', \tau')$.	Then we have:
	$$\xymatrix{H\la M\ar[d]_{Id\la f}\ar[r]^{\tau_H}& M\ra H\ar[d]_{f\ra Id}\ar[r]^{ev_1}& M\ar[d]_f\\
		H\la M'\ar[r]^{\tau'_H} & M'\ra H\ar[r]^{ev_1} & M'\\
	}$$ where the left square commutes by the centrality of $f$ and the right square commutes obviously.  Since the top row is $\mu$ and the bottom row is $\mu'$ so $f$ is a map of $aYD$-contramodules.
	
	Conversely, let $(M, \mu)$ be an $aYD$-contramodule. By Lemma \ref{aYD-contramodule lemma I}, there is a natural transformation $\tau: id \la M \rightarrow M \ra id$. Formula \eqref{quasi-contraI} gives the hexagon axiom for $\tau$. Equality \eqref{contramodule_unitI} gives the condition $\tau_k = id$.

	We observed  that the constructions of central elements from $aYD$-contramodules and vice versa are inverses of each other, thus we have proved the isomorphism of categories.
\end{proof}

\begin{remark}
	\label{weak_center_quasi}
	For a Hopf algebra with an invertible antipode  the center  $\Zc_\hmod(\hmod^{op})$ and the weak center $\text{w-}\Zc_\hmod(\hmod^{op})$ coincide (one can easily write the formula for the $\tau^{-1}$ - see the proof of Proposition \ref{Hopf:aydprop} and Remark \ref{weak_vs_strong}). In the quasi-Hopf case we can neither prove nor disprove the analogous fact.
	
\end{remark}

\subsection{Anti-Yetter-Drinfeld contramodules II}

We can introduce $aYD$-contramodules of type II using the actual categorical evaluation and adjunctions for internal homomorphisms.

\begin{lemma}
	\label{aYD-contramodule lemma}
	Let $M \in \hmod^{op}$. Natural transformations $\tau \in \text{Nat} (id \la M, M \ra id)$ are in 1-1 correspondence with $k$-linear maps $\nu: \Hom_k (H,M) \rightarrow M  $,  such that
	\begin{equation}
	\label{aYD-contramodule}
	h\cdot (\nu(f))=\nu(h^{21} f(S(h^{22})-h^1)).
	\end{equation}
\end{lemma}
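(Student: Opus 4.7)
The plan is to mirror the proof of Lemma~\ref{aYD-contramodule lemma I}, the critical modification being that the non-categorical evaluation $ev_1$ at $1 \in H$ is replaced throughout by the categorical right evaluation $\text{ev}^r$ of \eqref{ev}; this single substitution is what transforms the type~I formula \eqref{aYD-contramoduleI} into the alternative type~II formula \eqref{aYD-contramodule}. In the forward direction, given a natural transformation $\tau$, I would define
$$\nu(f) := \text{ev}^r(1 \otimes \tau_H(f)) = R (\tau_H f)(S^{-1}(Q) S^{-1}(\alpha) P),$$
where $\Phi^{-1} = P \otimes Q \otimes R$; note that in the Hopf-algebra limit ($\Phi = 1 \otimes 1 \otimes 1$, $\alpha = 1$) this recovers $\nu = \mu = ev_1 \circ \tau_H$, consistent with the equivalence of \eqref{aYD-contramoduleI} and \eqref{aYD-contramodule} derived earlier in the Hopf case. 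To verify \eqref{aYD-contramodule} I would combine the $H$-equivariance of $\tau_H$ (stemming from the module structures \eqref{lefthom} and \eqref{righthom} on the internal homs) with the $H$-linearity of $\text{ev}^r$ and the naturality of $\tau$ under the right-translation morphisms $r_h: H \to H$, $x \mapsto xh$; the quasi-Hopf axioms \eqref{coassoc}, \eqref{alpha}, and \eqref{coevev} will then be invoked to collapse the resulting $\Phi$- and $\alpha$-factors into the precise shape of the right-hand side of \eqref{aYD-contramodule}.

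For the converse, given $\nu$ satisfying \eqref{aYD-contramodule}, I would define for each $V \in \hmod$ and $\phi \in \Hom^l(V, M)$ a map $\tau_V(\phi) \in \Hom^r(V, M)$ by a formula of the shape $\tau_V(\phi)(v) = \nu(\Theta_v \phi)$, where $\Theta_v$ is a $k$-linear operator on $\Hom_k(H, M)$ depending on $v$ together with $\Phi$ and $\alpha$, modelled on the right adjunction formula \eqref{r_adj}. The $H$-equivariance of $\tau_V$ would then translate directly into \eqref{aYD-contramodule}; naturality in $V$ would follow by a direct diagram chase; and the unit condition $\tau_k = id$ would reduce to \eqref{unass}. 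That the two correspondences $\tau \leftrightarrow \nu$ are mutually inverse should follow, as in Lemma~\ref{aYD-contramodule lemma I}, from the key observation $\tau_H(f(-h)) = (\tau_H f)(-h)$, a direct consequence of naturality of $\tau$ under $r_h$.

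The hardest part will be the bookkeeping of the $\Phi$- and $\alpha$-twistings in both directions; unlike the type~I case few manipulations here collapse transparently, and one must repeatedly invoke the pentagon \eqref{phi} together with the anti-automorphism property of $S$ to keep the Sweedler indices of the various copies of $\Phi$ and $\Phi^{-1}$ under control. Conceptually, however, the argument will be a routine variation of Lemma~\ref{aYD-contramodule lemma I}.
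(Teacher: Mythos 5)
Your proposal is correct and takes essentially the same route as the paper: your $\nu(f)=\text{ev}^r(1\otimes\tau_H(f))$ is exactly the paper's $\hat{\tau}_H(1\otimes f)$, where $\hat{\tau}_H=\text{ev}^r\circ(id\otimes\tau_H)$ is the adjoint of $\tau_H$, and both directions are settled by the same three ingredients you list ($H$-equivariance of $\tau_H$, $H$-linearity of $\text{ev}^r$, naturality under right translations $r_h$). The only organizational difference is that the paper works with the adjoint $\hat{\tau}$ throughout — and, in the converse, factors $\hat{\tau}_V=\nu\circ\theta_V$ through the auxiliary module $\Hom^\Delta(H,M)$ with action $h\cdot f=h^{21}f(S(h^{22})-h^1)$ — so the $\Phi$- and $\alpha$-bookkeeping you anticipate never arises (no pentagon or antipode identities are invoked in this lemma, the twist data being absorbed entirely into the adjunction isomorphisms), and the unit condition $\tau_k=id$ you mention is not part of this statement but of the subsequent derivation of \eqref{contramodule_unit}.
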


\begin{proof}
Consider the morphism $\tau_H : H \la M \rightarrow M \ra H $. Using the adjunction it  is in one-to-one correspondence with the morphism: $\hat{\tau}_H: H\otimes \Hom^l(H,M) \rightarrow M $. Let's define $\nu$ by the rule:
$$\nu(f) = \hat{\tau}_H (1\otimes f)$$
for any $f \in \Hom_k(H,M)$ with the necessary $H$-module structure.

The right action of $H$ on itself is a morphism in $\hmod$, so, because $\hat{\tau}$ is natural, we have the formula: $\hat{\tau}(x\otimes f) = \hat{\tau}(1\otimes f(-\cdot x)) $. Using this we get the following equalities:
\begin{align*}
h \cdot \nu(f) &= h \cdot \hat{\tau}(1\otimes f) =\hat{\tau}(h\cdot (1\otimes f)) =
\hat{\tau}(h^1 \otimes h^{21}f(S(h^{22}) - ))\\
 &=\hat{\tau}(1 \otimes h^{21}f(S(h^{22}) - h^1)) = \nu(h^{21} f(S(h^{22})-h^1)).
\end{align*}

Conversely, given a $k$-linear map	$\nu:  \Hom_k (H,M) \rightarrow M$, equip the vector space $\Hom_k (H,M)$ with an $H$-module structure in the following way:
$$h \cdot f = h^{21} f(S(h^{22})-h^1). $$
Denote this $H$-module by $\Hom^\Delta (H,M) $.
Then \eqref{aYD-contramodule} implies that $\nu$ is a morphism in the category $\hmod$. Consider also a map:
$$\theta_V: V\otimes \Hom^l(V,M) \rightarrow \Hom^\Delta (H,M), \quad v\otimes f \mapsto (x \mapsto f(xv)). $$
One can easily check that it is  a morphism in $\hmod$. Now define $\hat{\tau}_V $ as a composition $\nu \circ \theta_V $. Clearly, $\hat{\tau}_V $ is natural in $V$.  These correspondences are mutually inverse. For a given $v \in V$ consider a morphism $H \rightarrow V$ by the rule $h \mapsto h \cdot v$. By naturality of $\hat{\tau}_V$, it is uniquely  defined from $\nu$.
\end{proof}

\begin{remark}
	It will be useful to write explicitly  the reconstruction formula of $\tau$ for a given  $M \in \hmod$ and a map   $\nu : \Hom_k(H,M)\rightarrow M$ satisfying \eqref{aYD-contramodule}. For $V \in \hmod$, the map $\tau_V: \Hom^l(V,M)\rightarrow \Hom^r(V,M)$ is constructed by the rule:
	\begin{equation}
	\label{mu}
	f \mapsto \nu (x \mapsto Z^1 f(S(Z^2)xYS^{-1}(\beta) S^{-1}(X) -    )).
		\end{equation}
	
	 We can see it from the proof of Lemma \ref{aYD-contramodule lemma} and the adjuction.
	
	 Analogously, one can observe that $\nu = \text{ev}^r_1 \circ \tau_H$, where $\text{ev}^r_1$ is a composition of maps $(\text{unit}\otimes id): \Hom^r(H,M) \rightarrow H \otimes \Hom^r(H,M) $ and $\text{ev}^r$.
	 \end{remark}

As for type I from the hexagon axiom we get:
\begin{align}
\begin{split}
\label{quasi-contra}
\nu(h \mapsto Z^1Y\nu(x\mapsto &Z^1f_{S(Z)\kappa(h)}(\kappa(x) S(X) )))\\ = &
Z\nu (h \mapsto Z^1f_{S(Z)(\kappa(h))^1S(Y)  }( S(Y)(\kappa(h))^2S(X)   ) ) ,
\end{split}
\end{align}
where we used the notation $\kappa(x) = S(Z^2)xYS^{-1}(\beta) S^{-1}(X) $.

The unitality condition $\tau_k = id_M$ applied with the rule \eqref{mu} gives  the  following relation. For $f \in \Hom^l(k,M)$,
$ f(1) = \nu (x \mapsto \varepsilon(S(Z^2)xYS^{-1}(\beta) S^{-1}(X))  Z^1 f( 1  )).$

For a quasi-Hopf algebra the equality $\varepsilon \circ S = \varepsilon $ holds (for the proof see \cite{Drinfeld}). So we can simplify:
$$\varepsilon(S(Z^2)xYS^{-1}(\beta) S^{-1}(X))  Z^1 = Z \varepsilon(Y)  \varepsilon(X) \varepsilon(\beta) \varepsilon(x) = \varepsilon(\beta) \varepsilon(x). $$
Finally, we have:
\begin{equation}
\label{contramodule_unit}
f(1) = \nu (x \mapsto  \varepsilon(\beta)  \varepsilon(x)   f( 1  )).
\end{equation}

\begin{definition}
	Let $H$ be a quasi-Hopf algebra. A pair $(M, \nu)$, where $M$ is a left $H$-module and $\nu: \Hom(H,M)  \rightarrow M$ is a $k$-linear map,  is called a left-right anti-Yetter-Drinfeld contramodule of type II, if it satisfies the equalities \eqref{aYD-contramodule},  \eqref{quasi-contra} and \eqref{contramodule_unit}.
\end{definition}

\begin{theorem}\label{aydprop}
The category of $aYD$-contramodules of type II for a quasi-Hopf algebra $H$ is isomorphic to $\text{w-}\Zc_\hmod(\hmod^{op})$.

\end{theorem}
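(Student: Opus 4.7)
The plan is to mirror the proof of Theorem \ref{aydpropI}, replacing type I data with type II data throughout.  The starting point is Lemma \ref{aYD-contramodule lemma}, which already gives the bijection between natural transformations $\tau\in\text{Nat}(id\la M,M\ra id)$ and $k$-linear maps $\nu:\Hom_k(H,M)\to M$ satisfying the type II $aYD$-condition \eqref{aYD-contramodule}.  Thus, at the level of underlying data, objects of $\text{w-}\Zc_\hmod(\hmod^{op})$ whose associated natural transformation is forgotten down to $\tau_H$ are in bijection with pairs $(M,\nu)$ where $M\in\hmod$ and $\nu$ satisfies \eqref{aYD-contramodule}.

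Next, I would show that the hexagon axiom of the weak center corresponds exactly to equation \eqref{quasi-contra}, and that $\tau_k=id_M$ corresponds to \eqref{contramodule_unit}.  The derivation of both formulas has already been carried out in the text preceding the statement: one unravels the three explicit associativity isomorphisms by choosing $V=W=H$ in the hexagon, applies the reconstruction formula \eqref{mu} to convert $\tau_V$ back into $\nu$, and evaluates at $1\in H$.  So for the ``central implies $aYD$'' direction the formulas \eqref{quasi-contra} and \eqref{contramodule_unit} come for free.  For the converse, one must check that these two conditions, which are the $V=W=H$ specializations of the hexagon and the $V=k$ specialization of unitality, in fact imply the full hexagon and unit diagrams for every $V,W\in\hmod$.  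This is the standard Yoneda--naturality argument: combining the naturality of $\tau$ proved in Lemma \ref{aYD-contramodule lemma} with the fact that any element of $V$ (resp.\ $V\otimes W$) is the image of $1\in H$ (resp.\ $1\otimes 1\in H\otimes H$) under an $H$-module map lets one reduce the general diagrams to the case $V=W=H$.

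The verification for morphisms is routine and parallels the proof of Theorem \ref{aydpropI}: if $f:(M,\tau)\to(M',\tau')$ is central, then the square expressing centrality at $V=H$ composed with $\text{ev}^r_1:\Hom^r(H,-)\to-$ (recall the remark after Lemma \ref{aYD-contramodule lemma} that $\nu=\text{ev}^r_1\circ\tau_H$) yields $f\circ\nu=\nu'\circ\Hom(H,f)$, so $f$ is a morphism of type II $aYD$-contramodules; conversely, any such $f$ induces a morphism of centers because the reconstruction \eqref{mu} is functorial in $M$.  Finally, one observes that the two constructions $\tau\mapsto\nu$ and $\nu\mapsto\tau$ are mutually inverse, which is exactly the content of the last paragraph of the proof of Lemma \ref{aYD-contramodule lemma}.

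The main obstacle I anticipate is the bookkeeping in checking that \eqref{quasi-contra} is equivalent to the hexagon diagram: one must substitute the three explicit associators written just before \eqref{quasi-contra} and track the various occurrences of $X,Y,Z,\alpha,\beta$ under $\kappa$.  Once the $V=W=H$ instance is matched, however, the reduction to general $V,W$ is a purely formal consequence of naturality, so no further genuine computation is required.
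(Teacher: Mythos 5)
Your proposal is correct and follows essentially the same route as the paper, whose proof of this theorem is literally a reference back to Theorem \ref{aydpropI}: the bijection on objects comes from Lemma \ref{aYD-contramodule lemma}, the hexagon and unit axioms are matched with \eqref{quasi-contra} and \eqref{contramodule_unit} via the specialization $V=W=H$ (with the converse recovered by naturality), and morphisms are handled by the same commutative-square argument using $\nu=\text{ev}^r_1\circ\tau_H$. Nothing in your outline deviates from or omits anything the paper relies on.
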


\begin{proof}
Similar to Theorem \ref{aydpropI}.
\end{proof}

\begin{remark}
	As we mentioned before, the difference between $aYD$-contramodules of type I and the ones of type II comes from the difference between the naive evaluation and the categorical evaluation in the category of $H$-modules for a quasi-Hopf algebra $H$. The two structure maps $\mu$ and $\nu$ are not the same. For the convenience of the reader we provide the explicit formulas relating the two structures.
	
	Given $(M, \mu)$, an $aYD$-contramodule of type I,  one keeps the $H$-module structure on $M$ unchanged, but  as an $aYD$-contramodule of type II $(M, \nu_\mu)$ is given by the formula:
	\begin{equation}
	\nu_\mu (f) = R \mu (h \mapsto f(h S^{-1}(Q) S^{-1}(\alpha) P )), \quad \text{for any} \ f \in \Hom(H,M) .
	\end{equation}
	
	Conversely, given $(M, \nu)$, an $aYD$-contramodule of type II, one can define $(M, \mu_\nu)$, an $aYD$-contramodule of type I by the formula:
	\begin{equation}
	\mu_\nu (f) =  \nu (h \mapsto (Z\cdot f)(h Y S^{-1}(\beta) S^{-1}(X)  )), \quad \text{for any} \ f \in \Hom(H,M) .
	\end{equation}
	
\end{remark}

 \subsection{Cohomology theory with contramodule coefficients}

 Let's begin with the discussion of stability.  Recall the Definition \ref{d:Z'} of $\Zc'_\hmod(\hmod^{op})$.   We want to write an explicit formula for this condition. First for a given $aYD$-contramodule $M$ of type I and an element $m \in M$ we can define a  $k$-linear map: $r'_m: H \rightarrow M$, by the rule:
 \begin{equation}
 x \mapsto  \beta    x  S^{-1} (Q) S^{-1} (\alpha) P  m      .
 \end{equation}

 \begin{definition}
 	Let $M$ be an 	$aYD$ contramodule of type I. It is called stable ($saYD$-contramodule), if for all $m \in M$ we have
 	\begin{equation}
 	\label{d:stable}
 	R \mu(r'_m)=m .
 	\end{equation}
 \end{definition}

 We immediately obtain the following Corollary of Theorem \ref{aydprop}.

 \begin{corollary}\label{saydcor}
 	The category of $saYD$-contramodules for $H$ is equivalent to $\Zc'_\hmod(\hmod^{op})$.
 \end{corollary}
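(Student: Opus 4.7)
The plan is to mimic the proof of Corollary \ref{Hopf:saydcor} from the Hopf case, but with explicit tracking of the $\alpha$, $\beta$, $\Phi$, and $\Phi^{-1}$ elements through the adjunctions from Proposition \ref{internalhom}. By Theorem \ref{aydpropI} we already know that $\text{w-}\Zc_\hmod(\hmod^{op})$ is (isomorphic to) the category of type I $aYD$-contramodules, and by Lemma \ref{weakstrong} the stability condition forces the weak center object into the strong center, so one only needs to verify that the categorical stability condition \eqref{stabilitycond} is equivalent to \eqref{d:stable}.

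First I would compute the image of $Id_M\in\Hom_H(M,M)$ under the first isomorphism $\Hom_H(M,M)\simeq\Hom_H(\id,M\la M)$. This corresponds to the unit $\lambda_M:k\ot M\to M$ via precomposition, and applying $\zeta^l$ from Proposition \ref{internalhom} gives the map $1\mapsto\big(m\mapsto\lambda_M(P\cdot 1\ot Q\beta S(R)m)\big)=\big(m\mapsto \varepsilon(P)Q\beta S(R)m\big)$. Using the standard quasi-Hopf identity $(\varepsilon\ot id\ot id)(\Phi^{-1})=1\ot 1$ (a consequence of \eqref{unass} and \eqref{phi}), this simplifies to $1\mapsto (m\mapsto\beta m)$, an $H$-invariant element of $\Hom^l(M,M)$ by \eqref{beta}.

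Next, apply $\tau_M$ to this element. By the formula in Lemma \ref{aYD-contramodule lemma I}, if $f\in\Hom^l(M,M)$ is $f(m)=\beta m$, then $\tau_M(f)(m)=\mu(x\mapsto\beta xm)$. Now transport this back via $\eta^r$ from Proposition \ref{internalhom} and the unit isomorphism $\rho_M:M\ot k\to M$. Explicitly, for the map $g:k\to\Hom^r(M,M)$, $g(1)(m)=\mu(x\mapsto\beta xm)$, we obtain
\begin{equation*}
\eta^r(g)(n\ot 1)=R\,g(1)(S^{-1}(Q)S^{-1}(\alpha)Pn)=R\mu\bigl(x\mapsto\beta x S^{-1}(Q)S^{-1}(\alpha)Pn\bigr)=R\mu(r'_n).
\end{equation*}
Therefore the chain \eqref{stabilitycond} sends $Id_M$ to the map $m\mapsto R\mu(r'_m)$, and this equals $Id_M$ if and only if $R\mu(r'_m)=m$ for every $m\in M$, which is exactly the stability condition \eqref{d:stable}.

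The two directions of the equivalence are now immediate. Given $M\in\Zc'_\hmod(\hmod^{op})$, Theorem \ref{aydpropI} provides the $aYD$-contramodule structure and the computation above yields $R\mu(r'_m)=m$; conversely, a stable type I $aYD$-contramodule produces an element of $\text{w-}\Zc_\hmod(\hmod^{op})$ whose associated chain maps $Id_M$ back to $Id_M$, so it lies in the full subcategory $\Zc'_\hmod(\hmod^{op})$. The main technical obstacle is the bookkeeping with $\alpha$, $\beta$, $\Phi^{\pm 1}$ in tracing the identity through the adjunctions; the key simplifying identities are $(\varepsilon\ot id\ot id)(\Phi^{-1})=1\ot 1$ for the left leg of the chain and the $H$-invariance of $\beta\cdot(-)$ via \eqref{beta}, together with the explicit form of $\eta^r$ for the right leg.
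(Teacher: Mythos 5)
Your proposal is correct and follows essentially the same route as the paper: the paper's proof is a one-line assertion that unwinding the adjunctions and $\tau$, together with the identity $\varepsilon(P)Q\beta S(R)=\beta$, turns condition \eqref{stabilitycond} literally into \eqref{d:stable}, and your computation supplies exactly that unwinding (correctly, including the passage from weak to strong center via Lemma \ref{weakstrong}). The only difference is that you spell out the intermediate steps ($Id_M\mapsto\beta\cdot(-)\mapsto\mu(x\mapsto\beta x\,\cdot)\mapsto R\mu(r'_m)$) that the paper leaves implicit.
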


 \begin{proof}
 	The condition \eqref{stabilitycond} that $id_M$ maps to the same under the chain of maps is literally the equation \eqref{d:stable} if we use the explicit formulas for adjunctions and $\tau$ and notice that $\varepsilon(P)Q\beta S(R) = \beta$
 \end{proof}

Using Lemma \ref{reptrace}, we can  construct Hopf-cyclic cohomology for quasi-Hopf algebras with $saYD$-contramodule coefficients.

\section{Hopf algebroids}\label{s:Hopfroids}

First we will give all the necessary definitions following \cite{Bohm}. Let $k$ be a commutative ring and $R$ an algebra over $k$ (not necessary commutative). A ring $A$ together with a ring map $\eta: R \rightarrow A$ is called an $R$-ring. Categorically, $R$-rings are monoids in the category of $R$-bimodules.

We will use the following easy observation. Giving the ring map $\eta: R\otimes_k R^{op} \rightarrow A$ is equivalent to giving two (so called, source and target) maps: $s := \eta(-\otimes 1_R): R \rightarrow A $ and $t := \eta(1_R\otimes -): R^{op} \rightarrow A $.

Similarly $R$-corings are defined as comonoids in the category of $R$-bimodules. So an $R$-coring is a triple $(C, \Delta, \epsilon)$,  where $C$ is an $R$-bimodule, and $\Delta: C \rightarrow C\otimes_R C$ and $\epsilon: C \rightarrow R$ are $R$-bimodule maps satisfying coassiativity and counit conditions.

Bialgebroids are generalization of bialgebras, but now algebra and coalgebra structures are defined in different monoidal categories.

\begin{definition}
Let $R_l$ be a $k$-algebra. A left $R_l$-bialgebroid $\mathcal{B}_l$ consists of $R_l\otimes_k R_l^{op}$-ring $(\mathcal{B}_l, s_l, t_l)   $	 and $R_l$-coring $(\mathcal{B}_l, \Delta_l, \epsilon_l) $ on the same $k$-module $\mathcal{B}_l$, such that
\begin{enumerate}
	\item the bimodule structure in the  $R_l$-coring $(\mathcal{B}_l, \Delta_l, \epsilon_l) $ is related to the  $R_l\otimes_k R_l^{op}$-ring  structure via
	$$ r \cdot b \cdot r' := s_l(r)t_l(r') b, \quad r,r' \in R_l, \ b \in \mathcal{B}_l $$
	\item the coproduct  $\Delta_l: \mathcal{B}_l \rightarrow \mathcal{B}_l\otimes_{R_l} \mathcal{B}_l $ corestricts to a $k$-algebra map from $\mathcal{B}_l$ to $\mathcal{B}_l \times^l_{R_l} \mathcal{B}_l $. Here $\mathcal{B}_l \times^l_{R_l} \mathcal{B}_l$ is the Takeuchi product defined by:
	$$ \mathcal{B}_l \times^l_{R_l} \mathcal{B}_l := \{\sum_{i} b_i \otimes_{R_l} b_i'\ \arrowvert \sum_{i} b_i t_l(r) \otimes_{R_l} b_i' = \sum_{i} b_i  \otimes_{R_l} b_i's_l(r) \ \forall r \in R_l  \} .$$
	
	\item The left counit is a left character of the $R_l$-ring $(\mathcal{B}_l, s_l)$, i.e., $$\epsilon_l(bb') = \epsilon_l(b s_l(\epsilon_l b')) = \epsilon_l(b t_l(\epsilon_l b')), \ \forall b,b' \in \mathcal{B}_l$$
\end{enumerate}
\end{definition}
So a bialgebroid is a quintuple of data $(\mathcal{B}_l, R_l, s_l, t_l, \Delta_l, \epsilon_l)$, but we will usually write it as just $\mathcal{B}_l$.

Analogously one defines right bialgebroids.
\begin{definition}
 Let $R_r$ be a $k$-algebra. A right $R_r$-bialgebroid $\mathcal{B}_r$ consists of  $R_r\otimes_k R_r^{op}$-ring $(\mathcal{B}_r, s_r, t_r) $ and $R_r$-coring $(\mathcal{B}_r, \Delta_r, \epsilon_r) $ on the same $k$-module $\mathcal{B}_r$. The following condition holds:
 \begin{enumerate}
 \item $R_r$-bimodule structure in the coring $(\mathcal{B}_r, \Delta_r, \epsilon_r) $  is given by:
$$r \cdot b \cdot r' :=  bt_r(r)s_r(r') , \quad r,r' \in R_r, \ b \in \mathcal{B}_r . $$

\item The coproduct  $\Delta_r: \mathcal{B}_r \rightarrow \mathcal{B}_r\otimes_{R_r} \mathcal{B}_r $ corestricts to a $k$-algebra map from $\mathcal{B}_r$ to $\mathcal{B}_r \times^r_{R_r} \mathcal{B}_l $. Here one changes the Takeuchi product:
$$ \mathcal{B}_r \times^r_{R_r} \mathcal{B}_r := \{\sum_{i} b_i \otimes_{R_r} b_i'\ \arrowvert \sum_{i} s_r(r) b_i  \otimes_{R_r} b_i' = \sum_{i} b_i  \otimes_{R_r} t_r(r)b_i' \ \forall r \in R_r  \} .$$

\item The right counit is a right character of the $R_r$-ring $(\mathcal{B}_r, s_r)$, i.e., $$\epsilon_r(bb') = \epsilon_r(s_r(\epsilon_r b)b') = \epsilon_r(t_r(\epsilon_r b)b'), \ \forall b,b' \in \mathcal{B}_r.$$

\end{enumerate}
\end{definition}

	We will use the following Sweedler's notation for coproducts:
	\begin{equation}
	\Delta_l(b) = b_1\otimes_{R_l} b_2,  \qquad  \Delta_r(b) = b^1\otimes_{R_r} b^2.
	\end{equation}

Let us consider a left bialgebroid $\mathcal{B}_l$. In particular  $\mathcal{B}_l$ is a ring, so we can consider a category of left modules over it $ _{\mathcal{B}_l}\mathcal{M}$. Any left $\mathcal{B}_l$-module $M$ is also a $R_l$-bimodule via
$$r \cdot m \cdot r' := s_l(r)t_l(r') m, \quad r,r' \in R_l, \ m \in M. $$
Given $M,N \in  {_{\mathcal{B}_l}}\mathcal{M}$, we have an $R_l$-bimodule $M\otimes_{R_l} N $. It can be supplied with the left $\mathcal{B}_l$-module structure via the left coproduct $\Delta_l $:
$$b \cdot (m\otimes_{R_l} n ) = b_1 m \otimes_{R_l} b_2n, \  \forall b \in \mathcal{B}_l $$

\begin{remark}
 Schauenburg proved \cite{sch} that for a $R_l\otimes_k R_l^{op} $-ring $(\mathcal{B}_l, s_l, t_l) $, the following structures are equivalent:
 \begin{itemize}
 	\item structure of a left algebroid on $\mathcal{B}_l$
 	 \item a monoidal structure on the category $ _{\mathcal{B}_l}\mathcal{M}$, such that the forgetful functor $ _{\mathcal{B}_l}\mathcal{M} \rightarrow {_{R_l}\mathcal{M}_{R_l}}$ is strictly monoidal.
 	  \end{itemize}

 Similarly, any right $\mathcal{B}_r $-module $M$ is an $R_r$-bimodule via
 $$r \cdot m \cdot r' :=  m s_r(r')t_r(r), \quad r,r' \in R_r, \ m \in M. $$
 If $M, N \in \mathcal{M}_{\mathcal{B}_r}$, we can induce a right  $\mathcal{B}_r $-module structure on $M\otimes_{R_r} N $ via right comultiplication $\Delta_r$.
Given an $R_r\otimes_k R_r^{op} $-ring $(\mathcal{B}_r, s_r, t_r) $, the structures are equivalent:
 \begin{itemize}
 	\item structure of a right algebroid on $\mathcal{B}_r$
 	\item a monoidal structure on the category of right $\mathcal{B}_r$-modules $\mathcal{M}_{\mathcal{B}_r}$, such that the forgetful functor $ \mathcal{M}_{\mathcal{B}_r} \rightarrow {_{R_r}\mathcal{M}_{R_r}}$ is strictly monoidal.
 \end{itemize}
\end{remark}

Finally we can recall the definition of a Hopf algebroid.

\begin{definition}
\label{def:algebroid}
	A Hopf algebroid is given by a triple $(\mathcal{H}_l, \mathcal{H}_r, S )$, \\where $\mathcal{H}_l = (\mathcal{H}_l, R_l, s_l, t_l, \Delta_l, \epsilon_l)$ is a left $R_l$-bialgebroid, \\$\mathcal{H}_r = (\mathcal{H}_r, R_r, s_r, t_r, \Delta_r, \epsilon_r)$ is a right $R_r$-bialgebroid on the same $k$-algebra $\mathcal{H}$\\ and $S: \mathcal{H} \rightarrow \mathcal{H} $ is a $k$-module map satisfying the following axioms:
	\begin{enumerate}
	\item $s_l\circ \epsilon_l\circ t_r = t_r, \quad  s_r\circ \epsilon_r\circ t_l = t_l, \quad t_l\circ \epsilon_l\circ s_r = s_r, \quad t_r\circ \epsilon_r\circ s_l = s_l. $
	\item Mixed coassociativity: $(\Delta_l \otimes_{R_r} id_{\mathcal{H}}) \Delta_r =  (id_{\mathcal{H}} \otimes_{R_l} \Delta_r  ) \Delta_l$ \\
		and $(\Delta_r \otimes_{R_l} id_{\mathcal{H}}) \Delta_l =  (id_{\mathcal{H}} \otimes_{R_r} \Delta_l  ) \Delta_r$.
	\item For $r \in R_l, \ r' \in R_r$ and $h \in \mathcal{H}$, we have $S(t_l(r)ht_r(r')) = s_r(r')S(h)s_l(r). $
	\item $m \circ (S\otimes_{R_l} id_\mathcal{H}) \circ \Delta_l = s_r\circ \epsilon_r$ and $m \circ (id_\mathcal{H} \otimes_{R_r} S ) \circ \Delta_r = s_l\circ \epsilon_l$,  where $m$ is a multiplication in the  $k$-algebra $\mathcal{H}$.
	\end{enumerate}
\end{definition}

\begin{remark}
	From property $(1)$ it follows that $R_l$ is isomorphic to $R_r^{op}$.
\end{remark}

Mixed coassociativity can be written in Sweedler's notation as:
$$ (h^1)_1 \otimes_{R_l} (h^1)_2   \otimes_{R_r} h^2 =    h_1 \otimes_{R_l} (h_2)^1   \otimes_{R_r}   (h_2)^2    $$
and
$$(h_1)^1 \otimes_{R_r} (h_1)^2   \otimes_{R_l} h_2 =    h^1 \otimes_{R_r} (h^2)_1   \otimes_{R_l}   (h^2)_2.$$

Let $\mathcal{H} = (\mathcal{H}_l, \mathcal{H}_r, S )$ be a Hopf algebroid. The map $S$ is called the antipode. From now on we will always assume that the antipode $S$ is invertible. As was proved in \cite[Proposition 4.4]{Bohm}, the antipode is an antihomomorphism of the ring $\mathcal{H} $.

\begin{remark}
For a Hopf algebroid $\mathcal{H}$ and any $h \in \mathcal{H}$ we have the following identities:
\begin{equation}  S^{-1}(h_2) h_1 = t_r \epsilon_r(h), \quad  \quad h^2 S^{-1}(h^1) = t_l\epsilon_l(h),  \end{equation}
see \cite[Lemma 4]{kow}.

For $r \in R_l$ we have
\begin{equation}
\label{kow1}
t_r\epsilon_rt_l(r) = S^{-1} (t_l(r)), \quad \quad s_r\epsilon_rs_l(r) = S(s_l(r)),
\end{equation}
see \cite[1.8]{kow}.
\end{remark}

\begin{lemma}
	\label{module S-1}
	For a Hopf algebroid $\mathcal{H} $, and $h \in \mathcal{H}$, $r \in R_l, r' \in R_r$ one has:
	\begin{equation}
	t_r(r') S^{-1}(h) t_l(r) = S^{-1}(s_l(r) h s_r(r'))
	\end{equation}
\end{lemma}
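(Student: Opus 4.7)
The plan is to reduce the claim to two simpler identities, namely
\[
S(t_l(r)) = s_l(r), \qquad S(t_r(r')) = s_r(r'),
\]
and then invoke the fact that $S^{-1}$ is an antihomomorphism of the $k$-algebra $\mathcal{H}$ (which follows since $S$ is, as mentioned right before the lemma).

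First, I would extract the two identities above from axiom (3) of Definition \ref{def:algebroid}. That axiom says $S(t_l(r) h t_r(r')) = s_r(r') S(h) s_l(r)$ for all $h \in \mathcal{H}$, $r \in R_l$, $r' \in R_r$. Specializing to $h = 1$ gives $S(t_l(r) t_r(r')) = s_r(r') s_l(r)$. Since $t_l, t_r, s_l, s_r$ are algebra maps (and $S(1)=1$ because $S$ is an algebra antihomomorphism), setting $r=1$ yields $S(t_r(r')) = s_r(r')$, and setting $r'=1$ yields $S(t_l(r)) = s_l(r)$. Applying $S^{-1}$ to both identities, we obtain
\[
t_l(r) = S^{-1}(s_l(r)), \qquad t_r(r') = S^{-1}(s_r(r')).
\]

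With these in hand, the lemma becomes immediate: substituting into the left-hand side,
\[
t_r(r')\, S^{-1}(h)\, t_l(r) = S^{-1}(s_r(r'))\, S^{-1}(h)\, S^{-1}(s_l(r)) = S^{-1}\bigl(s_l(r)\, h\, s_r(r')\bigr),
\]
where the last equality uses that $S^{-1}$ reverses the order of multiplication.

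There is essentially no obstacle here: the only point requiring care is remembering that the definition of the antipode on a Hopf algebroid is an \emph{anti}homomorphism of $\mathcal{H}$, so that $S^{-1}$ is too, and that axiom (3) is precisely designed to encode how $S$ intertwines source and target maps. The lemma is really just a packaging of this fact together with the invertibility of $S$.
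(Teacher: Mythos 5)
Your proposal is correct and follows essentially the same route as the paper: both extract $S(t_l(r)) = s_l(r)$ and $S(t_r(r')) = s_r(r')$ by specializing axiom (3) at the unit, and then conclude using that $S^{-1}$ is an antihomomorphism. Your write-up merely spells out the specialization ($h=1$, then $r=1$ or $r'=1$) in slightly more detail than the paper does.
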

\begin{proof}
	Because the antipode is a unital map, property (3) of Definition \ref{def:algebroid} implies: $S(t_l(r)) = s_l(r) $ and $S(t_r(r')) = s_r(r') $.  Because the antipode is an antihomomorphism, we have:
	$S^{-1}(s_l(r)h s_r(r')) = S^{-1}(s_r(r')) S^{-1}(h) S^{-1}(s_l(r)) = t_r(r') S^{-1}(h) t_l(r).$
\end{proof}

A structure of a left module over the Hopf algebroid $\mathcal{H}$  is a structure of a left module over the underlying $k$-algebra $\mathcal{H}$. We want to study the category of left $\mathcal{H}$-modules $_\mathcal{H} \mathcal{M} $. Firstly recall that it is a monoidal category, because $\mathcal{H}_l $ is a left bialgebroid, and a module structure is given by left comultiplication $\Delta_l$. We will denote the monoidal structure simply by $\otimes$.

If $M,  N $ are $R$-bimodules, we will denote by  $\Hom(M,N)_R$ a morphism of right $R$-modules, and by $\Hom_R(M,N)$ a morphism of left $R$-modules. We need the following easy lemma:
\begin{lemma}
	\label{right-left}
If $M,  N \in \hmodalg$, then $f \in \Hom(M,N)_{R_l}$ is equivalent to $f \in \Hom_{R_r}(M,N) $. Similarly, $f \in \Hom(M,N)_{R_r}$ is equivalent to $f \in \Hom_{R_l}(M,N) $.
\end{lemma}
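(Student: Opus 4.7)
The plan is to reduce both sides of each equivalence to the same $k$-linear constraint on $f$ by identifying certain images of source and target maps inside $\mathcal{H}$.

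First I would pin down the bimodule conventions: on a left $\mathcal{H}$-module $M$ the $R_l$-action is $r\cdot m = s_l(r)m$, $m\cdot r = t_l(r)m$ for $r\in R_l$, while the $R_r$-action is $r'\cdot m = s_r(r')m$, $m\cdot r' = t_r(r')m$ for $r'\in R_r$ (both inherited from the single left action of the underlying $k$-algebra $\mathcal{H}$). Under these conventions, being right $R_l$-linear means $f(t_l(r)m) = t_l(r)f(m)$ for all $r\in R_l$, and being left $R_r$-linear means $f(s_r(r')m) = s_r(r')f(m)$ for all $r'\in R_r$; analogously the dual pair corresponds to $s_l$ and $t_r$.

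The key step I would then carry out is to deduce from axiom (1) of Definition \ref{def:algebroid} the set-theoretic identifications
\[
t_l(R_l)\;=\;s_r(R_r)\qquad\text{and}\qquad s_l(R_l)\;=\;t_r(R_r)
\]
inside $\mathcal{H}$. For example, $t_l\circ\epsilon_l\circ s_r = s_r$ immediately gives $s_r(R_r)\subseteq t_l(R_l)$, while $s_r\circ\epsilon_r\circ t_l = t_l$ gives the reverse inclusion; the other identification is symmetric, using $s_l\circ\epsilon_l\circ t_r = t_r$ and $t_r\circ\epsilon_r\circ s_l = s_l$.

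With these in hand the lemma is essentially immediate: both ``$f(t_l(r)m)=t_l(r)f(m)$ for all $r\in R_l$'' and ``$f(s_r(r')m)=s_r(r')f(m)$ for all $r'\in R_r$'' simply assert that $f$ commutes with left multiplication (in $\mathcal{H}$) by every element of the common subset $t_l(R_l)=s_r(R_r)$, and the second equivalence is obtained in the same way from $s_l(R_l)=t_r(R_r)$. There is no real obstacle here; the content is carried entirely by axiom (1) of the Hopf algebroid, which enforces that the two source--target pairs have matching images in $\mathcal{H}$, so the only mildly delicate point is to be consistent about which of $s_r,t_r$ implements the ``left'' $R_r$-action on a left $\mathcal{H}$-module, but this is forced by the statement of the lemma.
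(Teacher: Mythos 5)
Your proposal is correct and follows essentially the same route as the paper: the paper's proof likewise invokes axiom (1) of Definition \ref{def:algebroid} (e.g.\ writing $f(s_r(a)m)=f(t_l(\epsilon_l s_r(a))m)$ and applying right $R_l$-linearity), which is just the element-wise form of your identifications $t_l(R_l)=s_r(R_r)$ and $s_l(R_l)=t_r(R_r)$. Your reading of which of $s_r,t_r$ implements the left $R_r$-action is also the one the paper uses.
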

\begin{proof}
If $f \in \Hom(M,N)_{R_l}$, then for any $r \in R_l$ and $m \in M$, $f(t_l(r)m) = t_l(r)f(m) $.  Now let's consider $a \in R_r $, then  $f(s_r(a)m) = f(t_l \epsilon_l s_r (a)m)$ (here we used property (1) in Definition \ref{def:algebroid}). By $R_l$-right linearity, $f(t_l \epsilon_l s_r (a)m) = t_l \epsilon_l s_r (a)f(m) $. Switching back we get $s_r(a)f(m) $.  Three other implications are proved analogously.
\end{proof}

 The closeness of the category of left modules over a Hopf algberoid was proved in \cite{sch_closed}. This result is crucial for most of the constructions in the paper. To fix notation and for the convenience of the reader we are going to include a proof here.

 \begin{proposition}
 	\label{internalhom_algebroids}
 	The category of left modules over a Hopf algebroid $_\mathcal{H} \mathcal{M} $ is a biclosed monoidal category.
 \end{proposition}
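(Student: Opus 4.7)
The plan is to construct explicit left and right internal hom functors for $\hmodalg$ by adapting the recipes from the Hopf algebra case of Section \ref{s:Hopf} and the quasi-Hopf case of Proposition \ref{internalhom}, with the crucial modification that the tensor product is now over $R_l$ instead of the base ring $k$. Since the monoidal structure on $\hmodalg$ comes from the left bialgebroid $\mathcal{H}_l$, the tensor product is $\otimes_{R_l}$, so any naive evaluation $\Hom^l(V,N) \otimes_{R_l} V \to N$ only descends from the free tensor product if the hom space consists of right $R_l$-linear maps.

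For the left internal hom, I would set $\Hom^l(M,N) := \Hom(M,N)_{R_l}$ with the left $\mathcal{H}$-action $(h \cdot \varphi)(m) := h_1 \varphi(S(h_2) m)$ and naive evaluation $\mathrm{ev}^l(\varphi \otimes m) := \varphi(m)$. Well-definedness of the action on right $R_l$-linear maps follows from the Takeuchi condition $h_1 t_l(r) \otimes_{R_l} h_2 = h_1 \otimes_{R_l} h_2 s_l(r)$ together with the fact that $S$ is a ring antihomomorphism by \cite[Proposition 4.4]{Bohm}. The $\mathcal{H}$-linearity of $\mathrm{ev}^l$ is a direct consequence of the antipode axiom $m \circ (S \otimes_{R_l} \mathrm{id}) \circ \Delta_l = s_r \circ \epsilon_r$ from Definition \ref{def:algebroid}(4), combined with identity \eqref{kow1}. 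The adjunction $\Hom_\mathcal{H}(W \otimes V, N) \simeq \Hom_\mathcal{H}(W, \Hom^l(V,N))$ is then the obvious one: $f \mapsto \bigl(w \mapsto f(w \otimes -)\bigr)$ with inverse $g \mapsto \mathrm{ev}^l \circ (g \otimes \mathrm{id})$, and the routine verification of its mutual inverseness is formally identical to the quasi-Hopf case, except simpler since the associator is trivial.

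For the right internal hom, set $\Hom^r(M,N) := {}_{R_r}\Hom(M,N)$; by Lemma \ref{right-left} this coincides as a set with the space of right $R_l$-linear maps, which is exactly what is required for the evaluation $\mathrm{ev}^r: M \otimes_{R_l} \Hom^r(M,N) \to N$, $m \otimes \varphi \mapsto \varphi(m)$, to descend to the $\otimes_{R_l}$. The left $\mathcal{H}$-action is $(h \cdot \varphi)(m) := h^2 \varphi(S^{-1}(h^1) m)$, using the right coproduct and the invertibility of the antipode; its well-definedness rests on Lemma \ref{module S-1}, which controls how $S^{-1}$ interacts with the source and target maps. The symmetric adjunction $\Hom_\mathcal{H}(V \otimes W, N) \simeq \Hom_\mathcal{H}(W, \Hom^r(V,N))$ is then constructed by the same formal pattern, with the role of $\Delta_l, S$ played by $\Delta_r, S^{-1}$.

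The main obstacle is the bookkeeping required to mediate between the two bialgebroid structures $\mathcal{H}_l$ and $\mathcal{H}_r$ sharing the same underlying ring: the tensor product is built from $\Delta_l$ and $R_l$, while the right internal hom invokes $\Delta_r$ and $S^{-1}$, so one must continually translate from one bialgebroid to the other. The mixed coassociativity axioms of Definition \ref{def:algebroid}(2), the antipode identities $S^{-1}(h_2) h_1 = t_r \epsilon_r(h)$ and $h^2 S^{-1}(h^1) = t_l \epsilon_l(h)$ recalled after Definition \ref{def:algebroid}, and the compatibility relations $s_l \circ \epsilon_l \circ t_r = t_r$, etc., are precisely what make the two structures cohere to yield a genuine adjunction. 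Beyond this non-trivial bookkeeping the argument is a structural translation of the Hopf algebra proof and requires no new conceptual ingredient.
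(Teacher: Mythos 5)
Your overall strategy --- explicit internal homs, naive evaluation, and the tensor--hom adjunction --- is the same as the paper's, but two of your structural choices are wrong in ways the Hopf algebroid axioms cannot repair. First, your action on the left internal hom, $(h\cdot\varphi)(m)=h_1\varphi(S(h_2)m)$, uses the \emph{left} coproduct, whereas the correct formula (and the paper's) is $h^1\varphi(S(h^2)-)$ built from the \emph{right} coproduct. The distinction is essential: $\Delta_r(h)=h^1\otimes_{R_r}h^2$ is balanced by $h^1s_r(a)\otimes h^2=h^1\otimes h^2t_r(a)$, and for $\varphi\in\Hom(M,N)_{R_l}$ (equivalently, by Lemma \ref{right-left}, left $R_r$-linear) the offending $s_r(a)$ lands directly in front of $\varphi$, can be pushed inside its argument, and is then absorbed by $s_r(a)S(h^2)=S(h^2t_r(a))$ via Definition \ref{def:algebroid}(3). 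With $\Delta_l$ the balancing is $t_l(r)h_1\otimes h_2=h_1\otimes s_l(r)h_2$, which produces $S(s_l(r))=s_r\epsilon_r s_l(r)$ sandwiched \emph{between} $S(h_2)$ and $m$, where no linearity of $\varphi$ can reach it; your formula is therefore not well defined. The same error propagates: the $\mathcal{H}$-linearity of $\mathrm{ev}^l$ and of $\zeta^l(f)$ is proved by collapsing $(h^1)_1\otimes(h^1)_2S(h^2)$ through mixed coassociativity and the antipode axioms $S(h_1)h_2=s_r\epsilon_r(h)$ and $h^1S(h^2)=s_l\epsilon_l(h)$, whereas your action leads to expressions of the type $h_2S(h_3)$ built purely from $\Delta_l$, which no axiom controls.

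Second, your right internal hom sits on the wrong underlying set. You take $\Hom^r(M,N)={}_{R_r}\Hom(M,N)$, which by Lemma \ref{right-left} equals $\Hom(M,N)_{R_l}$; the paper takes $\Hom_{R_l}(M,N)$, the \emph{other} space. The adjunction forces this: for $f\in\Hom_\hmodalg(V\otimes W,N)$ the curried map $f(-\otimes w)$ satisfies $f(s_l(r)v\otimes w)=s_l(r)f(v\otimes w)$ but not $f(t_l(r)v\otimes w)=t_l(r)f(v\otimes w)$ (since $\Delta_l(t_l(r))=1\otimes t_l(r)$), so $\zeta^r(f)$ does not land in your $\Hom^r$. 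Likewise, well-definedness of $h^2\varphi(S^{-1}(h^1)-)$ over $\otimes_{R_r}$ needs $\varphi$ to commute with $t_r$, which by Lemma \ref{right-left} is left $R_l$-linearity, not right $R_l$-linearity; and the descent of $\mathrm{ev}^r$ through $\otimes_{R_l}$ is governed by the module structure on the hom space (Lemma \ref{rights}), not by the raw linearity of $\varphi$, so the motivation you give for your choice is also off. You also omit the verification that the actions preserve the relevant linearity of $\varphi$, which in the paper requires Definition \ref{def:algebroid}(1), the Takeuchi condition and \eqref{kow1}. The one-sentence summary of what you missed: in a Hopf algebroid both internal homs must be fed by the right comultiplication, because it is $h^1\otimes S(h^2)$ and $h^2\otimes S^{-1}(h^1)$ --- not $h_1\otimes S(h_2)$ --- that are the well-defined translation maps.
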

 \begin{proof}
 	For any $M,  N \in \hmodalg$,  we can define the left internal Hom by
 	\begin{equation}
 		\label{lefthomalg}
 		\Hom^l(M, N)=\Hom(M, N)_{R_l},\quad h\cdot \varphi = h^1\varphi(S(h^2)-).
 	\end{equation}

 	Notice that we used the structure of right comultiplication here.
 	
 	First of all, we need to prove that this is well defined. For any $a \in R_r $ consider, by Lemma \ref{right-left} and property (3) in Definition \ref{def:algebroid}:
 	$$
 	h^1 s_r(a) \varphi (S(h^2) -)   = h^1 \varphi (s_r(a) S(h^2) - ) = h^1 \varphi(S(h^2 t_r(a)) - ).
 	$$		
 	So at least the combination of symbols $h^1\varphi(S(h^2)-)$ makes sense. 	 Now let's check  if $\varphi \in \Hom(M, N)_{R_l}$ implies that $h^1\varphi(S(h^2)-) $ is also a morphism of right $R_l$-modules. To do that we observe:
 	 	$$
 	t_l(r)h^1 \otimes_{R_r} h^2 = s_r\epsilon_rt_l(r)h^1 \otimes_{R_r} h^2 = h^1 \otimes_{R_r} t_r\epsilon_rt_l(r)h^2
 	= h^1 \otimes_{R_r} S^{-1}(t_l(r))h^2 	
 	 			  		$$ by property (1) in Definition \ref{def:algebroid}, right Takeuchi property, and \eqref{kow1} respectively.
 	
 Finally, using that $S$ is an antihomomorphism, we have:
  $$t_l(r)h^1 \varphi(S(h^2) - )  = h^1 \varphi(S(h^2) t_l(r) - ) ,$$		
  so   		$h^1\varphi(S(h^2)-) \in \Hom(M, N)_{R_l}$.

 		An adjunction map
 		\begin{equation}
 			\zeta^l: \ \Hom_\hmodalg (M\otimes N, L) \rightarrow \Hom_\hmodalg (M, \Hom^l(N,L))
 		\end{equation}
 	is defined similarly to the case of Hopf algebras:
 		\begin{equation}
 			f \mapsto (m \mapsto f(m\otimes  -)).
 		\end{equation}
 		
 		Take $f \in \Hom_\hmodalg (M\otimes N, L) $. Observe that $\zeta^l(f)$ is a morphism of $\mathcal{H}$-modules:
 		\begin{align*}
 			h \cdot \zeta^l(f) (m) & = h \cdot f(m\otimes  -) = h^1 ( f(m\otimes S(h^2) - ))  = f((h^1)_1 m \otimes (h^1)_2 S(h^2) -) \\ & \stackrel{\text{(i)}}{=} f(h_{1}m \otimes (h_2)^1 S((h_2)^2)   -)   \stackrel{\text{(ii)}}{=} f(h_1 m \otimes_{R_l} s_l \epsilon_l(h_2) -)\\ 	
 			 & = f(t_l \epsilon_l(h_2)h_1 m \otimes_{R_l} -)  \stackrel{\text{(iii)}}{=}	\zeta^l(f) (hm). \end{align*}  Where (i) is mixed coassociativity, (ii) property (4) of Definition \ref{def:algebroid}, (iii) $R_l$-bimodule structure and counit.
		
		Define  the evaluation morphism:
 	\begin{equation}
 	 	 \text{ev}^l:\ \Hom^l(M, N)\otimes M \rightarrow N,  	
 	\quad  \varphi \otimes m \mapsto \varphi ( m)  .
 	\end{equation}
 	
 	Notice that the morphism $\text{ev}^l$ is the morphism of left $\mathcal{H}$-modules. Indeed,
 	\begin{align*}
 	\text{ev}^l (h \cdot (\varphi \otimes m ) ) &= 	\text{ev}^l (h_1 \cdot \varphi \otimes h_2 \cdot m) = (h_1)^1\varphi(S((h_1)^2)  h_2 m)\\
 	&= h^1\varphi(S((h^2)_1) (h^2)_2 m) =  h^1\varphi( s_r\epsilon_r(h^2) m)  \\
	&= h^1 s_r\epsilon_r(h^2) \varphi(m) 	= h \cdot \text{ev}^l(\varphi \otimes m).
 	\end{align*}
 	
 	Using the evaluation we can construct a map that is to be the inverse of $\zeta^l$:
 	\begin{equation}
 	\eta^l: \ \Hom_\hmodalg (M, \Hom^l(N,L))\rightarrow  \Hom_\hmodalg (M\otimes N, L)
 	\end{equation}
 	by the rule:
 	\begin{equation}
 	 \eta^l(g) = \text{ev}^l \circ(g\otimes id).
 	  	\end{equation}  It is a morphism 	of $\mathcal{H}$-modules as a composition of such.  To see that these maps are mutually inverse we can forget about additional structure and use the usual tensor-hom adjunction theorem for $R_l$-bimodules.
 		 Hence we proved that the category $\hmodalg$ is left closed.
 		\\
 		
 		For any $M,  N\in \hmodalg$,  we can define the right internal Hom by
 		\begin{equation}
 		\label{righthomalg}
 		\Hom^r(M, N)=\Hom_{R_l}(M, N)\quad h\cdot \varphi = h^2\varphi(S^{-1}(h^1)-),
 		\end{equation}
 	unlike the case of left internal homs, the underlying structure is a morphism of left $R_l$-modules.
 	
 		As for the left hom, we need to check that this is well defined. For any $a \in R_r $ consider:
 		$$
 		h^2 t_r(a) \varphi (S^{-1}(h^1) -)   = h^2 \varphi (t_r(a) S^{-1}(h^1) - ) 	
 		 = h^2 \varphi(S^{-1}(h^1 s_r(a)) - )$$ by Lemma \ref{right-left} and Lemma \ref{module S-1} respectively.
 		So the notation $h^2\varphi(S^{-1}(h^1)-) $ makes sense.
 		
 		Now  we check that if $\varphi \in \Hom_{R_l}(M, N)$ then $h^2\varphi(S^{-1}(h^1)-) $ is also a morphism of left $R_l$-modules. First, make an observation:
 		$$h^1 \otimes_{R_r} s_l(r) h^2 = h^1 \otimes_{R_r} t_r \epsilon_r s_l (h^2 )= s_r\epsilon_r s_l(r) h^1 \otimes_{R_r} h^2= S(s_l(r))h^1 \otimes_{R_r} h^2$$ by property (1) in Definition \ref{def:algebroid}, right Takeuchi property, and \eqref{kow1} respectively. Because $S^{-1}$ is an antihomomorphism, we have:
 		$$s_l(r) h^2\varphi(S^{-1}(h^1)-)  = h^2 \varphi(S^{-1}(h^1) s_l(r) - ) ,$$		
 		so   		$h^2\varphi(S^{-1}(h^1)-) \in \Hom_{R_l}(M, N)$.

		Let's define an adjunction map:
 		\begin{equation}
 		\label{r_adj alg}
 		\zeta^r: \ \Hom_\hmodalg (N\otimes M, L) \rightarrow \Hom_\hmodalg (M, \Hom^r(N,L))
 		\end{equation}
 		by the rule:
 		\begin{equation}
 		f \mapsto (m \mapsto f( - \otimes m)).
 		\end{equation}

Define also the evaluation morphism:
	\begin{equation}
	\text{ev}^r:\ M \otimes \Hom^r(M, N) \rightarrow N,  	
	\quad  m \otimes \varphi  \mapsto   \varphi ( m) .
	\end{equation}

Check that the morphism $\text{ev}^r$ is the morphism of left $\mathcal{H}$-modules:
 	\begin{align*}
 	\text{ev}^r (h \cdot (m \otimes \varphi ) ) &= 	\text{ev}^r (h_1 \cdot m \otimes h_2 \cdot \varphi ) = (h_2)^2 \varphi(S^{-1}((h_2)^1)  h_1 m)\\
 	&= h^2\varphi(S^{-1}((h^1)_2) (h^1)_1 m) =  h^2\varphi( t_r\epsilon_r(h^1) m)  \\
	&= h^2 t_r\epsilon_r(h^1) \varphi(m) 	= h \cdot \text{ev}^r(m \otimes \varphi).
 	\end{align*}
		
As above one can construct the inverse of $\zeta^r$ using the evaluation, we skip the details.  So $\hmodalg$ is right closed.
 	
 \end{proof}

At the end of the section let's observe the following useful fact.

\begin{lemma}
	\label{rights} Let $\varphi \in \Hom^l(M,N)$, and $\psi\in \Hom^r(M,N) $. Then:
  \begin{align*}	t_l(r) \cdot \varphi = \varphi(s_l(r)-), \quad s_l(r) \cdot \varphi = s_l(r)\varphi(-) \\
  s_l(r) \cdot \psi = \psi(t_l(r)-), \quad t_l(r) \cdot \psi = t_l(r)\psi(-). \end{align*}
\end{lemma}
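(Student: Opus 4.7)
The plan is to reduce all four identities to direct substitutions into the action formulas \eqref{lefthomalg} and \eqref{righthomalg}, after first working out explicit Sweedler expansions of $\Delta_r(s_l(r))$ and $\Delta_r(t_l(r))$. To that end, I would begin by establishing the auxiliary formulas
\[ \Delta_r(s_r(r')) = 1 \otimes_{R_r} s_r(r'), \qquad \Delta_r(t_r(r')) = t_r(r') \otimes_{R_r} 1, \]
valid in any right bialgebroid; these are forced by the counit axioms $(\epsilon_r \otimes \mathrm{id})\Delta_r = \mathrm{id}$ and $(\mathrm{id} \otimes \epsilon_r)\Delta_r = \mathrm{id}$, once one correctly identifies $R_r \otimes_{R_r} \mathcal{H}$ and $\mathcal{H} \otimes_{R_r} R_r$ with $\mathcal{H}$ via the $R_r$-bimodule structure $r' \cdot h \cdot r'' = h t_r(r') s_r(r'')$.

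Next, property (1) of Definition \ref{def:algebroid} gives $t_l(r) = s_r(\epsilon_r(t_l(r)))$ and $s_l(r) = t_r(\epsilon_r(s_l(r)))$, so the auxiliary formulas specialize to
\[ \Delta_r(t_l(r)) = 1 \otimes_{R_r} t_l(r), \qquad \Delta_r(s_l(r)) = s_l(r) \otimes_{R_r} 1. \]
Substituting into \eqref{lefthomalg} immediately yields $t_l(r) \cdot \varphi = \varphi(S(t_l(r))-)$ and $s_l(r) \cdot \varphi = s_l(r)\varphi(-)$; the first simplifies to $\varphi(s_l(r)-)$ via $S(t_l(r)) = s_l(r)$, which follows from property (3) of Definition \ref{def:algebroid} applied with $h=r'=1$, together with $S(1)=1$ (a consequence of property (4) applied to $h=1$). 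Exactly analogous substitutions into \eqref{righthomalg}, using $S^{-1}(s_l(r)) = t_l(r)$ and $S^{-1}(1) = 1$, produce the two remaining identities for $\psi$.

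The only substantive (and even then only mildly so) obstacle is the preliminary derivation of $\Delta_r$ on the image of $s_r$ and $t_r$: the $R_r$-bimodule conventions for a right bialgebroid are twisted relative to those of a left bialgebroid (the left action is given by right multiplication by $t_r$, and so on), so one must track the directions of the actions and the corresponding canonical isomorphisms carefully. Everything else is routine algebraic bookkeeping that invokes only the axioms explicit in Definition \ref{def:algebroid}.
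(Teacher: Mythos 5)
Your overall strategy coincides with the paper's: compute $\Delta_r(t_l(r))$ and $\Delta_r(s_l(r))$ explicitly and substitute into the action formulas \eqref{lefthomalg} and \eqref{righthomalg}, finishing with $S(t_l(r))=s_l(r)$ and $S(1)=1$. The substitutions and the identities $S(t_l(r))=s_l(r)$, $S^{-1}(s_l(r))=t_l(r)$ are handled correctly, and in the same way the paper handles them (it establishes $S(t_l(r))=s_l(r)$ inside the proof of Lemma \ref{module S-1}).

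The one genuine misstep is your justification of the auxiliary formulas $\Delta_r(s_r(r'))=1\otimes_{R_r}s_r(r')$ and $\Delta_r(t_r(r'))=t_r(r')\otimes_{R_r}1$. These are \emph{not} forced by the counit axioms: those axioms only say that $(\epsilon_r\otimes\mathrm{id})\Delta_r$ and $(\mathrm{id}\otimes\epsilon_r)\Delta_r$ are the identity after the canonical identifications, which constrains but does not determine $\Delta_r$ on any particular element (already for an ordinary bialgebra the counit axioms do not yield $\Delta(1)=1\otimes 1$; that comes from $\Delta$ being a unital algebra map). What actually forces your formulas is that $\Delta_r$ is an $R_r$-bimodule map (part of the coring structure) together with $\Delta_r(1)=1\otimes 1$ (which holds because $\Delta_r$ corestricts to a unital $k$-algebra map into the Takeuchi product): writing $s_r(r')=1\cdot r'$ and $t_r(r')=r'\cdot 1$ for the $R_r$-actions on $1\in\mathcal{H}$, bilinearity gives $\Delta_r(s_r(r'))=(1\otimes 1)\cdot r'=1\otimes_{R_r} s_r(r')$ and $\Delta_r(t_r(r'))=r'\cdot(1\otimes 1)=t_r(r')\otimes_{R_r} 1$. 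This is essentially the route the paper takes, except that it invokes the $R_l$-bilinearity of $\Delta_r$ (citing \cite{Bohm}) to write $\Delta_r(t_l(r))=1^1\otimes_{R_r}t_l(r)1^2$ and $\Delta_r(s_l(r))=s_l(r)1^1\otimes_{R_r}1^2$ directly and then uses unitality; your detour through property (1) and the $R_r$-structure reaches the same expressions. Once you replace the appeal to the counit axioms by the bilinearity-plus-unitality argument, the proof is complete.
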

\begin{proof}
To prove this, we need to recall that $\Delta_r$ is both $R_l$- and $R_r$-bimodule map \cite{Bohm}. So
$\Delta_r(t_l(r)) = 1^1 \otimes_{R_r} t_l(r) 1^2 $ and $\Delta_r(s_l(r)) = s_l(r)1^1 \otimes_{R_r}  1^2 $.
Then the proof of the first formula is as follows:
$$
t_l(r) \cdot \varphi = 1^1 \varphi (S(t_l(r) 1^2 ) - )  = 1^1 \varphi (S( 1^2 )s_l(r) - )
= \varphi (s_l(r) -)$$ where the last equality is due to $\Delta_r$ being unital, while the second formula is obtained thus:
$$s_l(r) \cdot \varphi = s_l(r)1^1 \varphi (S(1^2 ) - ) = s_l\varphi (-) .$$
The two others are proved similarly.	
	\end{proof}

\section{Anti-Yetter-Drinfeld contramodules for Hopf algebroids}\label{s:Hopfroid-ayd-contra}

In this section we are going to describe the coefficients for the Hopf-cyclic cohomology theory for Hopf algebroids. We will discuss only aYD contramodules and proceed similarly to the case of quasi-Hopf algebras. The category $\hmodalg$ is biclosed monoidal.
Using the adjunctions for $M, V, W\in \hmodalg$:
$$ \ \Hom_\hmodalg (W\otimes V, M) \simeq \Hom_\hmodalg (W, \Hom^l(V,M)),$$ and
$$\Hom_\hmodalg (V\otimes W, M) \simeq \Hom_\hmodalg (W, \Hom^r(V,M)),$$
proved in Proposition \ref{internalhom_algebroids}, we can introduce the contragradient $\hmodalg$-bimodule category $\hmodalg^{op}$ as in Section \ref{s:General}. Specifically, for $M\in\hmodalg^{op}$ and $V\in\hmodalg$, the action is given by:
\begin{equation}
M\ra V= \Hom^r(V, M), \quad ~~ \text{and} \quad ~~ V\la M=\Hom^l(V, M).
\end{equation}

We want to understand  the center $\Zc_\hmodalg(\hmodalg^{op}) $.

First of all let us recall the notion of contramodules over bialgebroids (as it was  defined in \cite{BoS}, generalizing the classical contramodules for coalgebras \cite{EM_contra}).

\begin{definition}
\label{contra_alg}
A right contramodule over a left $R_l$-bialgebroid $\mathcal{B}_l$ is a right $R_l$-module $M$ together with a right $R_l$-module map:
$$
\mu: \Hom(\mathcal{B}_l,M)_{R_l} \to M,
$$
called the  contraaction, such that the diagrams commute:

\begin{small}
\begin{center}
 	\begin{tikzpicture}
  	\matrix (m) [matrix of math nodes, row sep=4em, column sep=8em]
 	{\Hom(\mathcal{B},  \Hom(\mathcal{B}, M)_{R_l})_{R_l} &  & \Hom(\mathcal{B},M)_{R_l} \\
 		 \Hom(\mathcal{B} \otimes_{R_l} \mathcal{B}, M)_{R_l} & \Hom(\mathcal{B} , M)_{R_l} & M \\};
 	\draw[->] (m-1-1) to node {$\Hom(\mathcal{B},\mu)_{R_l}$} (m-1-3);
 	\draw[->] (m-1-3) to node {$\mu$} (m-2-3);
 	\draw[->] (m-2-2) to node {$\mu$} (m-2-3);
 	\draw[->] (m-1-1) to node {$\cong$} (m-2-1);
 	\draw[->](m-2-1) to node {$\Hom(\Delta_l, M)_{R_l}$}(m-2-2);
 	\end{tikzpicture}
 \end{center}
\end{small}
and

\begin{center}
	\begin{tikzpicture}
	\matrix (m) [matrix of math nodes, row sep=3em, column sep=5em]
	{\Hom(R_l, M)_{R_l} & \Hom(\mathcal{B}, M)_{R_l}  \\
		M \\};
	\draw[->] (m-1-1) to node {$\Hom(\epsilon_l, M)_{R_l}$} (m-1-2);
	\draw[->] (m-1-1) to node {$\cong  $} (m-2-1);
	\draw[->](m-1-2) to node {$\mu $}(m-2-1);
	\end{tikzpicture}
\end{center}

where  $\mathcal{B}_l $  is  a $R_l$-bimodule via $r \cdot b \cdot r' = s_l(r)t_l(r')b $. The right $R_l$-module structure on $\Hom(\mathcal{B}_l,M)_{R_l}$ is defined by $ f(-) \cdot r = f(r\cdot -) = f(s_l(r)-)$.  The isomorphism  $\Hom(\mathcal{B},  \Hom(\mathcal{B}, M)_{R_l})_{R_l} \rightarrow \Hom(\mathcal{B} \otimes_{R_l} \mathcal{B}, M)_{R_l}$ is simply a tensor-hom adjunction.   \end{definition}

Let's write the formulas explicitly; the first commutative diagram gives:
\begin{equation}\label{contraaction assoc}
\mu (x \mapsto \mu( y \mapsto \varphi(x \otimes y))) = \mu (h \mapsto \varphi(h_1 \otimes_{R_l} h_2)), \quad\text{for} \  \varphi \in  \Hom(\mathcal{B} \otimes_{R_l} \mathcal{B}, M)_{R_l}.\end{equation}

The second diagram is:
\begin{equation}
\label{contraaction unit}
\mu (x \mapsto  m \cdot \epsilon_l(x)) =    m,  \quad \text{for any}  \ m \in M .\end{equation}

As was observed in \cite{BoS} or \cite{kow_contra}, we can equip a right contramodule $M$ with a structure of a left $R_l$-module via:
$$ r \cdot m = \mu(x \mapsto m \cdot \epsilon_l(x s_l(r))), \quad \text{for any}  \ m \in M, r \in R_l. $$
Under this action the morphism $\mu$ is an $R_l$-bimodule map, if we define a left $R_l$-module structure on $\Hom(\mathcal{B}_l,M)_{R_l} $ by $r \cdot f(-) = f(-s_l(r))  $ (see \textit{loc.cit.})  Let us state the definition of an anti-Yetter-Drinfeld contramodule over a Hopf algebroid. Notice that the definition is slightly different from the one used in \cite{kow_contra}.

\begin{definition}
	\label{def:aYD_contra_alg}
	An anti-Yetter-Drinfeld ($aYD$) contramodule $M$ over a Hopf algebroid $\mathcal{H}$ is a left $\mathcal{H} $-module and a right $\mathcal{H}_l$-contramodule, such that both underlying $R_l$-bimodule structures coinside: $r \cdot m \cdot r' = s_l(r)t_l(r')m $, and the following $aYD$-condition holds:
	\begin{equation}
	\label{aYD-contramodule_alg2}
		h^2 (\mu(f(-S^{-1}(h^1))))=\mu(h^{1} f(S(h^{2})-)).
		\end{equation}
	
\end{definition}

Let's explicitly write the condition that $\mu$ is $R_l $-bilinear. Right $R_l$-linearity is:
\begin{equation}
\label{right mu}
\mu (f(s_l(r) - )) = t_l(r)\mu(f(-)), \quad  f \in \Hom (\mathcal{H},M)_{R_l}, \ r \in R_l .
\end{equation}
Left $R_l$-linearity gives:
\begin{equation}
\label{left mu}
\mu (f( - s_l(r))) = s_l(r)\mu(f(-)), \quad  f \in \Hom (\mathcal{H},M)_{R_l}, \ r \in R_l .
\end{equation}

\begin{remark}
	Let us observe that all expressions in equation \eqref{aYD-contramodule_alg2} are well defined, if $\mu$ is $R_l$-bilinear. We just need to check the linearity of the left hand side.  Namely, we have by Lemma \ref{module S-1} and Lemma \ref{right-left} with formula \eqref{right mu} respectively: $$
	h^2 (\mu(f(-S^{-1}(h^1 s_r(a))))) =    h^2 (\mu(f(-t_r(a)S^{-1}(h^1))))
	= h^2t_r(a)\mu (f (-  S^{-1}(h^1) ) ).$$
\end{remark}

We are going to show that the category of $aYD$-contramodules is  the weak center of a bimodule category $\hmodalg^{op} $.

\begin{lemma}
	\label{aYD-contramodule_alg lemma}
	Let $M \in \hmodalg^{op}$. Natural transformations $\tau \in \text{Nat} (id \la M, M \ra id)$ are in 1-1 correspondence with right $R_l$-module  maps $\mu: \Hom (\mathcal{H},M)_{R_l} \rightarrow M  $ (formula \eqref{right mu}), which are also left $R_l$-module maps (formula \eqref{left mu}), such that the property \eqref{aYD-contramodule_alg2} holds.
\end{lemma}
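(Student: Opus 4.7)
The plan is to mirror the strategy of Lemma \ref{aYD-contramodule lemma I} from the quasi-Hopf case, but with the added bookkeeping needed to track the $R_l$-bimodule structures that appear in the Hopf algebroid setting.

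For the forward direction: given a natural transformation $\tau$, I would define $\mu(f) := \tau_{\mathcal{H}}(f)(1)$. The key observation is that for any $a \in \mathcal{H}$, right multiplication $r_a: \mathcal{H} \to \mathcal{H}$ is a morphism in $\hmodalg$, so naturality of $\tau$ applied to $r_a$ yields the workhorse identity $\tau_\mathcal{H}(f)(a) = \mu(f(-a))$ for all $a \in \mathcal{H}$. Using Lemma \ref{rights} together with the $\mathcal{H}$-linearity of $\tau_\mathcal{H}$, applied to $t_l(r)$ and $s_l(r)$ respectively, I would then deduce both the right and left $R_l$-linearity of $\mu$, i.e., formulas \eqref{right mu} and \eqref{left mu}. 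Finally, to verify the aYD condition \eqref{aYD-contramodule_alg2}, compute $\mu(h \cdot f) = (h \cdot \tau_\mathcal{H}(f))(1) = h^2 \tau_\mathcal{H}(f)(S^{-1}(h^1))$ using the formula for the action on $\Hom^r$, and then apply the workhorse identity to get $h^2 \mu(f(-S^{-1}(h^1)))$; the left side unpacks as $\mu(h^1 f(S(h^2)-))$ via the formula for the action on $\Hom^l$.

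For the reverse direction: given $\mu$, define $\tau_V(f)(v) := \mu(x \mapsto f(xv))$ for each $V \in \hmodalg$ and $f \in \Hom^l(V, M)$. The first check is that the assignment $x \mapsto f(xv)$ really lies in $\Hom(\mathcal{H}, M)_{R_l}$; this uses associativity of the $\mathcal{H}$-action on $V$ together with the right $R_l$-linearity of $f$. Next, I would verify that $\tau_V(f) \in \Hom^r(V,M) = \Hom_{R_l}(V,M)$, which is exactly the left $R_l$-linearity \eqref{left mu} of $\mu$ applied to $g(x) := f(xv)$ with the identity $g(xs_l(r)) = f(xs_l(r)v)$. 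The $\mathcal{H}$-linearity of $\tau_V$ unfolds, after setting $F(x) := f(xv)$, into precisely the aYD condition \eqref{aYD-contramodule_alg2}. Naturality in $V$ follows from a short computation: for $\phi: V \to W$ a morphism of left $\mathcal{H}$-modules, both $\phi^* \circ \tau_W$ and $\tau_V \circ \phi^*$ applied to $f$ and evaluated at $v$ give $\mu(x \mapsto f(x\phi(v)))$, using that $\phi$ commutes with the $\mathcal{H}$-action.

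To conclude, I would show the two constructions are mutually inverse. The direction $\mu \to \tau \to \mu$ is immediate: evaluating at $1 \in \mathcal{H}$ recovers $\mu$. For the direction $\tau \to \mu \to \tau$, the workhorse identity again plays the central role: for any $v \in V$, the map $\phi_v: \mathcal{H} \to V$, $x \mapsto xv$, is a morphism in $\hmodalg$, and naturality of $\tau$ with respect to $\phi_v$ gives $\tau_\mathcal{H}(f \circ \phi_v)(1) = \tau_V(f)(\phi_v(1)) = \tau_V(f)(v)$, which is exactly the formula for the reconstructed $\tau_V$ from $\mu$. The main technical obstacle, as foreshadowed in the remark preceding the lemma and in Remark \ref{weak_center_quasi}, is ensuring that every instance of $\mu(g)$ that appears in the argument is applied to a genuinely right $R_l$-linear $g$; I expect to lean repeatedly on Lemmas \ref{module S-1}, \ref{right-left}, and \ref{rights} to keep the bimodule bookkeeping consistent, particularly when the antipode $S^{-1}$ intertwines $s_l, t_l$ with $s_r, t_r$.
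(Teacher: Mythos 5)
Your proposal is correct and follows essentially the same route as the paper's proof: define $\mu$ by evaluation at $1$, use naturality with respect to right multiplication (and, for the inverse check, with respect to $x\mapsto xv$) as the workhorse identity, and read off the three conditions from Lemma \ref{rights}, the internal-hom module structures, and the adjunction. The only minor imprecision is in the forward verification of \eqref{left mu}: what is actually needed there is the workhorse identity together with the left $R_l$-linearity of the map $\tau_{\mathcal{H}}(f)$ itself (built into the definition of $\Hom^r=\Hom_{R_l}$), rather than the $\mathcal{H}$-linearity of $\tau$ applied to $s_l(r)$, which via Lemma \ref{rights} yields the different (true) identity $\mu(s_l(r)f(-))=\mu(f(-t_l(r)))$.
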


\begin{proof}

	Consider $f \in \Hom(\mathcal{H},M)_{R_l}$, induce the $\mathcal{H}$-module structure corresponding to the left internal homomorphism. Given a morphism $\tau_\mathcal{H} : \mathcal{H} \la M \rightarrow M \ra \mathcal{H} $, define: $\mu(f) = \tau_\mathcal{H}(f)(1) $, which we denote by $ev_1(\tau_\mathcal{H}(f))$.
	Let $x \in \mathcal{H}$ and $f\in\Hom(\mathcal{H},M)_{R_l}$, observe that $\tau_\mathcal{H}(f(-h))=(\tau_\mathcal{H} f)(-h)$ since $r_h:\mathcal{H}\to \mathcal{H}$ is a morphism in $\hmodalg$.

	First, let's check that $\mu$ satisfies \eqref{right mu}:
	\begin{align*}
	\mu(f(s_l(r)-)) & \stackrel{\text{(i)}}{=} \mu (t_l(r) \cdot f(-))= ev_1(\tau (t_l(r)\cdot f ))\stackrel{\text{(ii)}}{=} ev_1(t_l(r)\cdot\tau ( f))\\
	& \stackrel{\text{(i)}}{=} ev_1 (t_l(r)(\tau ( f)) )= t_l(r)\tau(f)(1) = t_l(r) \mu(f),
	\end{align*} where (i) is Lemma \ref{rights} and (ii) holds since  $\tau$ is $\mathcal{H}$-linear.
	
	Now we check that $\mu$ satisfies \eqref{left mu}:
\begin{align*}
\mu (f( - s_l(r))) & = ev_1 \tau (f( - s_l(r)))= ev_1 \tau (f)( - s_l(r))  \\
&= \tau (f)( s_l(r))\stackrel{\text{(i)}}{=} s_l(r)\tau (f)(1)= s_l(r) \mu (f),	
\end{align*} where (i) is due to the left $R_l$-linearity of $\tau$.

	Now we can check the $aYD$-condition:

	\begin{align*}\mu(h^{1} f(S(h^{2})-)) &=ev_1\tau(h^{1} f(S(h^{2})-)) =ev_1\tau((h \cdot f)(-)) =ev_1 h\cdot\tau(f(-))\\
	&=ev_1 h^2(\tau f)(S^{-1}(h^1)-) =h^2(\tau f)(S^{-1}(h^1))\\
		& = h^2ev_1\tau (f (- S^{-1}(h^1)) ) = h^2 \mu(f (- S^{-1}(h^1)) ).\end{align*}

	Conversely, given a map $\mu: \Hom(\mathcal{H}, M)_{R_l} \to M$, satisfying \eqref{aYD-contramodule_alg2}, \eqref{right mu} and \eqref{left mu}.
	For any $V \in \hmodalg $ one can define a map:
	 $$\tau_V: V \la M \rightarrow M \ra V, \ \text{by the rule} \ f \mapsto \mu(x\mapsto f(x-)).$$
	
We check that it is well-defined. First of all, for a given $v \in V$ and $f \in \Hom(V,M)_{R_l} $, a map $x \mapsto f(xv) $ is in $\Hom(\mathcal{H}, M)_{R_l}$. Indeed, $f(t_l(r)xv) = t_l(r)f(xv) $, because $f$ is $R_l$-linear.
	
	 Then we need to understand that the target, a map $v \mapsto \mu(x\mapsto f(xv))$, is in $\Hom_{R_l}(V,M)$. It follows directly from formula \eqref{left mu}.

	Finally, we should check, that the constructed map $\tau$ is a morphism of $\mathcal{H}$-modules:
	
	 \begin{align*}
	 (h\cdot \tau (f))(v)=& \,\, h^2 \tau f(S^{-1}(h^1)v)=h^2\mu(x\mapsto f(xS^{-1}(h^1)v))\\\stackrel{\eqref{aYD-contramodule_alg2}}{=}&\mu (x \mapsto (h \cdot f) (v) )=(\tau (h \cdot f))(v).
	\end{align*}

	Given an $H$-homomorphism $\varphi: V \rightarrow W $, consider a diagram:
	
	$$\xymatrix{\Hom^l(W,M) \ar[d]_{-\circ \varphi}\ar[r]^{\tau_W}& \Hom^r(W,M) \ar[d]_{-\circ \varphi }\\
		\Hom^l(V,M) \ar[r]^{\tau_V} & \Hom^r(V,M). }$$
	It commutes, so we have naturality.
	
	These correspondences are mutually inverse. The only nontrivial direction follows from the observation that $\tau_{\mathcal{H}}(f(-h))=(\tau_{\mathcal{H}} f)(-h)$.
\end{proof}

Finally, we can formulate the following:

\begin{proposition}\label{ayd_alg_contra_prop}
	The category of $aYD$-contramodules for a Hopf algebroid $\mathcal{H}$ is  isomorphic to $\text{w-}\Zc_\hmodalg(\hmodalg^{op})$.
	
\end{proposition}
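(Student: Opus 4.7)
The approach follows the pattern of Theorem \ref{aydpropI} very closely. Lemma \ref{aYD-contramodule_alg lemma} already supplies a bijection between families of natural morphisms $\tau_V : V \la M \to M \ra V$ (not yet satisfying the weak-center axioms) and right $R_l$-linear maps $\mu : \Hom(\mathcal{H}, M)_{R_l} \to M$ that are simultaneously left $R_l$-linear and satisfy the $aYD$-condition \eqref{aYD-contramodule_alg2}. What remains is to show that, under this bijection, the hexagon axiom for $\tau$ corresponds to the contraaction associativity \eqref{contraaction assoc}, the unit constraint $\tau_{R_l} = \mathrm{id}_M$ corresponds to the contramodule unit axiom \eqref{contraaction unit}, and morphisms in $\text{w-}\Zc_\hmodalg(\hmodalg^{op})$ correspond to morphisms of $aYD$-contramodules.

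For the hexagon I specialize to $V = W = \mathcal{H}$ exactly as in the quasi-Hopf case, but here the situation is much cleaner: since the forgetful functor $\hmodalg \to {}_{R_l}\mathcal{M}_{R_l}$ is strict monoidal, the associator on $\hmodalg$ is strict and all the awkward $\Phi$-factors that plagued the quasi-Hopf setting disappear. Taking $f \in \Hom^l(\mathcal{H} \otimes_{R_l} \mathcal{H}, M)$, chasing both sides of the hexagon and evaluating at $1 \otimes 1$, the first side delivers $\mu(x \mapsto \mu(y \mapsto f(x \otimes y)))$ and the second $\mu(h \mapsto f(h_1 \otimes h_2))$, where the $\mathcal{H}$-action on $\mathcal{H} \otimes_{R_l} \mathcal{H}$ enters via $\Delta_l$. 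Naturality of $\tau$ in $V$ and $W$ promotes this equality for $V = W = \mathcal{H}$ to the general hexagon, so the hexagon reduces precisely to \eqref{contraaction assoc}.

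For the unit constraint, the monoidal unit in $\hmodalg$ is $R_l$ rather than the ground ring, and the natural identifications $\Hom^l(R_l, M) \cong M \cong \Hom^r(R_l, M)$ via evaluation at $1 \in R_l$ are standard. Under these, the condition $\tau_{R_l} = \mathrm{id}$, combined with the formula $\mu(f) = ev_1(\tau_\mathcal{H}(f))$ and naturality along the counit $\epsilon_l : \mathcal{H} \to R_l$, unwinds to \eqref{contraaction unit}. For morphisms, if $f : M \to M'$ sits in the weak center, postcomposing the centrality square at $V = \mathcal{H}$ with $ev_1$ yields $f \circ \mu = \mu' \circ \Hom(\mathcal{H}, f)_{R_l}$ directly, as in Theorem \ref{aydpropI}; conversely, a morphism of $aYD$-contramodules induces centrality for every $V$ because $\tau_V$ is reconstructed naturally from $\mu$ via the recipe $f \mapsto \mu(x \mapsto f(x\, -))$ from the proof of Lemma \ref{aYD-contramodule_alg lemma}.

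The main obstacle I anticipate is not any single step, but the bookkeeping for the interplay of left versus right $R_l$-module structures on the various internal homs and the Takeuchi-type balancing conditions, particularly in verifying that the intermediate expressions in the hexagon chase actually live in the correct category at each stage. Lemma \ref{rights} will have to be invoked repeatedly to reconcile the left and right actions of $R_l$ (through $s_l$ and $t_l$) on morphisms in $\Hom^l$ and $\Hom^r$. Once that bookkeeping is in hand, the calculation is essentially the Hopf-algebra argument of Proposition \ref{Hopf:aydprop} transported to the algebroid setting, and the two constructions are visibly mutually inverse on the nose.
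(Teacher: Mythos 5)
Your proposal is correct and follows essentially the same route as the paper's proof: invoke Lemma \ref{aYD-contramodule_alg lemma} for the $\tau\leftrightarrow\mu$ correspondence, specialize the (strictly associative) hexagon to $V=W=\mathcal{H}$ and evaluate at $1\otimes 1$ to recover the contraaction axiom \eqref{contraaction assoc}, match $\tau_{R_l}=\mathrm{id}$ with the counit axiom, and check morphisms by postcomposing the centrality square at $V=\mathcal{H}$ with $ev_1$. The paper likewise notes that these identifications are of the ``if and only if'' kind (your naturality argument is the justification it leaves implicit), so no further commentary is needed.
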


\begin{proof}
	
	Given a central element  $(M,\tau)\in\Zc_\hmodalg(\hmodalg^{op})$, define $\mu$ as in Lemma \ref{aYD-contramodule_alg lemma}. It satisfies aYD condition, so we just need to check that $M$ is a right $\mathcal{H}$-contramodule.
	
	By definition of the center and because all associativity maps are trivial, the diagram
	
	\begin{equation}\label{assoc}\xymatrix{W\la(V\la M)\ar[r]^{Id\la\tau}& W\la(M\ra V)\ar[r]& (W\la M)\ra V\ar[r]^{\tau\ra Id}  &(M\ra W)\ra V\ar[d]\\
		(W\ot V)\la M\ar[u]\ar[rrr]^{\tau} & & & M\ra(W\ot V) \\
	}\end{equation}
	commutes for any $V, W \in \hmodalg$.
Taking $f$ along the second row to $w\ot v\mapsto\mu(h\mapsto f(h_1 w\ot h_2 v))$ is the same as taking it the long way around, which is $w\ot v\mapsto\mu(x\mapsto\mu(h\mapsto f(xw\ot hv)))$. In particular, if we take $\mathcal{H}$ for  $V$ and $W$ and assume $v=w=1$, we will obtain the contraaction condition.  Next, we have that $R_l\la M\to M\ra R_l$ is the identity.

 Define: $m(r)= t_l(r)  m$, then
 \begin{equation}\label{unit}
 \mu(h\mapsto m(\epsilon_l(h s_l(r)))) = \mu(h\mapsto m(h \cdot r)) = \tau m(r)= m(r) =  t_l(r) m.\end{equation}
 The first equality is just an $\mathcal{H}$-module  structure of $R_l$.
	
	Finally, suppose that $f:M\to M'$ is a map in the center, then we have $$\xymatrix{H\la M\ar[d]_{Id\la f}\ar[r]^{\tau}& M\ra H\ar[d]_{f\ra Id}\ar[r]^{ev_1}& M\ar[d]_f\\
		H\la M'\ar[r]^\tau & M'\ra H\ar[r]^{ev_1} & M\\
	}$$ where the left square commutes by the centrality of $f$ and the right square commutes obviously.  Since the top row is $\mu_M$ and the bottom row is $\mu_{M'}$ so $f$ is a map of $aYD$-contramodules. \\
	
	Conversely, consider an $aYD$-contramodule $M$. By Lemma \ref{aYD-contramodule_alg lemma} it provides us with  natural morphisms $\tau_V: V \la M \rightarrow M \ra V$ for every $V \in \hmodalg$.  Observe that the considerations \eqref{assoc} and \eqref{unit} are of the if and only if kind, so  $\mu$ being a contraaction implies that the diagram \eqref{assoc} commutes and $\tau_{R_l} = id$.
	
	 If $\phi:M\to M'$ is a map of $aYD$-contramodules then $$(\phi \circ\tau f)(v)= \phi(\mu(h\mapsto f(hv)))=\mu(h\mapsto \phi(f(hv)))=\tau(\phi \circ f)(v)$$ so that $\phi$ is a morphism in $\Zc_\hmod(\hmod^{op})$. 	What has been shown so far is that if $M$ is an $aYD$-contramodule, then $(M,\tau)\in\Zc_\hmod(\hmod^{op})$ and any $g:M\to M'$ a morphism of $aYD$-contramodules induces a morphism between the corresponding central elements.

	As we noted in Lemma \ref{aYD-contramodule_alg lemma}, the constructions of central elements from $aYD$-contramodules and vice versa are inverses of each other, thus we have proved the isomorphism of categories.
\end{proof}

\begin{remark}
	Similar to Remark \ref{weak_center_quasi} we cannot prove nor disprove whether the center coincides with the weak center in the case of Hopf algebroids.
\end{remark}

Recall the following:
\begin{definition}
	An $aYD$-contramodule $M$ is called stable ($saYD$), if
	$\mu (r_m) = m$, where the map $r_m: \mathcal{H} \rightarrow M$, is given by $\ h\mapsto hm$.
\end{definition}

By Lemma \ref{weakstrong}, $\Zc'_\hmodalg(\hmodalg^{op})$ is the full subcategory of the center that consists of objects such that the identity map $Id\in\Hom_\mathcal{H}(M,M)$ is mapped to same via \begin{equation}\label{stabilitycond_alg}\Hom_\mathcal{H}(M,M)\simeq\Hom_\mathcal{H}(\id,M\la M)\simeq \Hom_\mathcal{H}(\id,M\ra M)\simeq \Hom_\mathcal{H}(M,M).\end{equation}  We immediately obtain the following Corollary of Proposition \ref{ayd_alg_contra_prop}.

\begin{corollary}\label{saydcor_alg}
	The category of $saYD$-contramodules for a Hopf algebroid $\mathcal{H}$ is  isomorphic to $\Zc'_\hmodalg(\hmodalg^{op})$.
\end{corollary}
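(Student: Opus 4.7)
The plan is to mirror the argument of Corollary \ref{Hopf:saydcor}. By Proposition \ref{ayd_alg_contra_prop}, the category of $aYD$-contramodules is isomorphic to $\text{w-}\Zc_\hmodalg(\hmodalg^{op})$, and by Lemma \ref{weakstrong} the full subcategory $\Zc'_\hmodalg(\hmodalg^{op})$ sits inside the strong center (so in particular in the weak center). What remains is purely a translation: to show that the stability condition \eqref{stabilitycond_alg} imposed at the categorical level on $(M,\tau)$ corresponds exactly to the $saYD$ condition $\mu(r_m)=m$ on the associated contramodule.

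First I would establish the forward direction. Given an $saYD$-contramodule $M$, trace $Id_M\in\Hom_\mathcal{H}(M,M)$ through the chain in \eqref{stabilitycond_alg}. Under the adjunction $\zeta^l:\Hom_\mathcal{H}(M,M)\simeq\Hom_\mathcal{H}(\id, M\la M)$, $Id_M$ corresponds to the map $\id\to\Hom^l(M,M)$ selecting $Id_M$; applying $\tau_M$ and the inverse adjunction $\eta^r$ returns $\tau_M(Id_M)\in\Hom_\mathcal{H}(M,M)$. Using the explicit formula $\tau_V(f)(v)=\mu(x\mapsto f(xv))$ from the proof of Lemma \ref{aYD-contramodule_alg lemma}, I compute $\tau_M(Id_M)(m)=\mu(x\mapsto xm)=\mu(r_m)$. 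Therefore \eqref{stabilitycond_alg} says $\mu(r_m)=m$ for all $m\in M$, which is the $saYD$ condition.

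For the converse, I would invoke the strengthening established inside the proof of Lemma \ref{weakstrong}: stability at $M$ forces, via Yoneda, the full chain $\Hom_\mathcal{H}(V,M)\simeq\Hom_\mathcal{H}(\id,V\la M)\to\Hom_\mathcal{H}(\id, M\ra V)\simeq\Hom_\mathcal{H}(V,M)$ to be the identity for every $V\in\hmodalg$. Specializing to $V=\mathcal{H}$ and noting that $r_m:\mathcal{H}\to M$, $h\mapsto hm$, is $\mathcal{H}$-linear and hence lies in $\Hom_\mathcal{H}(\mathcal{H},M)$, I obtain $\tau_\mathcal{H}(r_m)=r_m$. Evaluating at $1\in\mathcal{H}$ and using $\mu=ev_1\circ\tau_\mathcal{H}$ as in the construction in Lemma \ref{aYD-contramodule_alg lemma} yields $\mu(r_m)=ev_1\tau_\mathcal{H}(r_m)=r_m(1)=m$, so $M$ is $saYD$.

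The only point requiring mild care, and the closest thing to a real obstacle, is verifying that the adjunction identifications behave correctly in the bimodule setting of Hopf algebroids, where $\id=R_l$ and the internal homs involve the distinguished right/left $R_l$-linearity structures from \eqref{lefthomalg} and \eqref{righthomalg}. This is essentially bookkeeping: $r_m$ is $R_l$-balanced because $r_m(h s_l(r))=hs_l(r)m = h\cdot(rm)$, and the evaluation-and-adjunction identities used above are exactly those set up in the proof of Proposition \ref{internalhom_algebroids}. Once these are in hand, the proof is a direct translation of Corollary \ref{Hopf:saydcor} to the Hopf algebroid setting.
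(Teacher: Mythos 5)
Your proposal is correct and follows essentially the same route as the paper: the forward direction computes $\tau(Id)(m)=\mu(r_m)$ so that \eqref{stabilitycond_alg} becomes the $saYD$ condition, and the converse uses the Yoneda-strengthened stability from Lemma \ref{weakstrong} at $V=\mathcal{H}$ applied to $r_m\in\Hom_\mathcal{H}(\mathcal{H},M)$, followed by evaluation at $1$. The extra bookkeeping you flag about the $R_l$-linearity structures is fine and is implicitly handled by the adjunctions of Proposition \ref{internalhom_algebroids}, exactly as you say.
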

\begin{proof}
	Let $M$ be an $aYD$-contramodule, then to ensure that the condition \eqref{stabilitycond_alg} is satisfied, we need that $$\tau(Id)(m)=\mu(h\mapsto hm)=\mu(r_m)=m$$ which is exactly the $saYD$ condition.  On the other hand, if $M$ is a central element that satisfies \eqref{stabilitycond_alg}, then it also satisfies that the chain of isomorphisms \begin{equation}\label{fullstabcond}\Hom_\mathcal{H}(V,M)\simeq\Hom_\mathcal{H}(\id,V\la M)\simeq\Hom_\mathcal{H}(\id,M\ra V)\simeq \Hom_\mathcal{H}(V,M) \end{equation} is identity for any $V\in\hmodalg$.  Take $V=\mathcal{H}$ and note that $r_m\in\Hom_\mathcal{H}(\mathcal{H},M)$ so that \eqref{fullstabcond} implies that $\tau(r_m)=r_m$ and so $\mu(r_m)=ev_1\tau(r_m)=ev_1(r_m)=m$, i.e., $M$ is a stable $aYD$-contramodule.
\end{proof}

Using Lemma \ref{reptrace}, we can  construct Hopf-cyclic cohomology for Hopf algebroids with $saYD$-contramodule coefficients.



\begin{thebibliography}{99}
	
\bibitem{Bohm} G. B\"{o}hm,	\emph{Hopf Algebroids}, Handbook of Algebra Vol 6, edited by M. Hazewinkel, Elsevier, 2009, pp. 173--236.

\bibitem{BoS} G. B\"{o}hm, D. Stefan,  \emph{A categorical approach to cyclic duality}, J. Noncommutative Geometry 6 (2012), no. 3, 481--538.
	
\bibitem{contra} T. Brzezinski, \emph{Hopf-cyclic homology with contramodule coefficients},  Quantum groups and noncommutative spaces, 1--8, Aspects Math., E41, Vieweg + Teubner, Wiesbaden, 2011.

\bibitem{BPO_closed} D. Bulacu, F. Panaite and F. Van Oystaeyen
\emph{Quantum traces and quantum dimensions for quasi-Hopf algebras},
Comm. Algebra 27 no. 12 (1999), 6103-6122.



\bibitem{CM98} A. Connes, H. Moscovici, \emph{Hopf algebras, cyclic cohomology and the transverse index theorem}, Commun. Math. Phys. 198 (1998), 199--246.

\bibitem{CM99}  A. Connes, H. Moscovici, \emph{Cyclic cohomology
and Hopf algebras}, Lett. Math. Phys. 48 (1999), 97--108.

\bibitem{Drinfeld} V. Drinfeld, \emph{Quasi-Hopf algebras},  Leningrad Mathematical Journal, 1990, 1:6, 1419--1457


\bibitem{EM_contra} S. Eilenberg and J.C. Moore, \emph{Foundations of relative homological algebra}, Mem. Amer. Math. Soc. 55 (1965)


\bibitem{GNN} S. Gelaki, D. Naidu and D. Nikshych, \emph{Centers of graded fusion categories}, Algebra \& Number Theory 3 no. 8 (2009), 959-990
	


\bibitem{hkrs1} P. M. Hajac, M. Khalkhali, B. Rangipour, and Y. Sommerhauser, \emph{Hopf-cyclic homology and cohomology with coefficients}, C. R. Math. Acad. Sci.Paris 338 (2004), no. 9, 667--672.

\bibitem{hkrs2}  P. M. Hajac, M. Khalkhali, B. Rangipour, and Y. Sommerh\"{a}user, \emph{Stable anti-Yetter-Drinfeld modules},	C. R. Acad. Sci. Paris, Ser. I, 338 (2004) 587--590.

\bibitem{hks} M. Hassanzadeh, M. Khalkhali, and I. Shapiro, \emph{Monoidal Categories,  2-Traces, and Cyclic Cohomology}, preprint  arXiv:1602.05441.

\bibitem{js} P. Jara, D. Stefan, \emph{Hopf-cyclic homology and relative cyclic homology of Hopf-Galois extensions.}
Proc. London Math. Soc. (3) 93 (2006), no. 1, 138--174.

\bibitem{KS} I. Kobyzev, I. Shapiro, \emph{Anti-Yetter-Drinfeld modules for quasi-Hopf algebras}, preprint arXiv:1804.02031

\bibitem{kow_contra} N. Kowalzig, \emph{When Ext is a Batalin-Vilkovisky algebra}, 2016, arXiv:1610.01229


\bibitem{kow} N. Kowalzig, H. Posthuma \emph{The Cyclic Theory of Hopf Algebroids}, Journal of Noncommutative Geometry \textbf{5} (2011), no. 3, 423--476

\bibitem{majid_closed} S. Majid, \emph{Foundations of Quantum Group Theory}, Cambridge University Press, 1995

\bibitem{majid} S. Majid, \emph{Quantum Double for Quasi-Hopf Algebras}, Letters in Mathematical Physics (1998), 45:1, 1--9

\bibitem{SS_closed} \v{S}. Sak\'{a}lo\v{s}, \emph{On categories associated to a Quasi-Hopf algebra}, Comm. Algebra 45 no. 2 (2017), 722-748.
	
\bibitem{sch} P. Schauenburg, \emph{Bialgebras over noncommutative rings and a structure theorem for Hopf bimodules}, Appl. Categ. Structures \textbf{6} (1998), 193--222

\bibitem{sch_closed} P. Schauenburg, \emph{Duals and doubles of quantum groupoids ($\times_R$-Hopf algebras)}, AMS Contemp. Math. 267 pp. 273-299  (2000)

\bibitem{s} I. Shapiro, \emph{Some invariance properties of cyclic cohomology with coefficients}, preprint   arXiv:1611.01425.





\end{thebibliography}
\end{document}